\setlist{leftmargin=*, topsep=0.5em, parsep=0pt, itemsep=1em, labelindent=0pt, align=left}
\theoremstyle{definition}
\newtheorem{theorem}{Theorem}[section]
\newtheorem{definition}{Definition}[section]
\newtheorem{proposition}{Proposition}[section]
\newtheorem{remark}{Remark}[section]
\newtheorem{lemma}{Lemma}[section]
\numberwithin{equation}{section}
\newcommand{\Pro}{\mathbb{P}}
\newcommand{\Ex}{\mathbb{E}}
\newcommand{\Var}{\mathrm{Var}}
\newcommand{\Cov}{\mathrm{Cov}}
\newcommand{\sgn}{\mathrm{sgn}}
\newcommand{\EE}{\mathcal{E}}
\newcommand{\GG}{\mathcal{G}}
\newcommand{\HH}{\mathcal{H}}
\newcommand{\NN}{\mathcal{N}}
\newcommand{\boH}{\textbf{H}}
\newcommand{\bH}{\bar H}
\newcommand{\tH}{\tilde H}
\newcommand{\hk}{\hat k}
\newcommand{\dd}{\mathrm {d}}
\newcommand{\reals}{\mathbb{R}}
\newcommand{\naturals}{\mathbb{N}}
\newcommand{\vep}{\varepsilon}
\newcommand{\gind}[2]{\textbf{1}_{#1}(#2)}
\newcommand{\rind}[1]{\textbf{1}_{#1}}
\numberwithin{table}{section}
\definecolor{red}{HTML}{D62728}
\definecolor{blue}{RGB}{ 0, 109, 219}
\definecolor{dgreen}{rgb}{0,.8,0}
    \let\Cref\crtCref
    \let\cref\crtcref
\begin{document}
\title[]{Existence, renormalization, and regularity properties of higher order derivatives of self-intersection local time of fractional Brownian motion}
\author{Kaustav Das$^\dagger$}
\address{$^\dagger$School of Mathematics, Monash University, Victoria, 3800 Australia.}
\author{Greg Markowsky$^\dagger$}
\email{kaustav.das@monash.edu, greg.markowsky@monash.edu}
\date{}

 \DeclareGraphicsExtensions{.pdf,jpg}
\vspace{12pt}

\crefformat{enumi}{item~(#2#1#3)}
\Crefformat{enumi}{Item~(#2#1#3)}
\crefrangeformat{enumi}{items~(#3#1#4) to~(#5#2#6)}
\Crefrangeformat{enumi}{Items~(#3#1#4) to~(#5#2#6)}

%
\begin{abstract}
In a recent paper by Yu (arXiv:2008.05633, 2020), higher order derivatives of self-intersection local time of fractional Brownian motion were defined, and existence over certain regions of the Hurst parameter $H$ was proved. Utilizing the Wiener chaos expansion, we provide new proofs of Yu's results, and show how a Varadhan-type renormalization can be used to extend the range of convergence for the even derivatives. \\[.6cm]

\hspace{-.42cm}Keywords: Self-intersection local time; derivatives of self-intersection local time; fractional Brownian motion; Wiener chaos.
\end{abstract}
\maketitle
\section{Introduction}
\label{sec:introduction}

Let $(B^H_t)$ be a fractional Brownian motion (fBm) with Hurst parameter $H \in (0,1)$. Namely, $(B^H_t)$ is a centred Gaussian process, continuous a.s., with covariance function 
\begin{align*}
	R_H(t,s) = \frac{1}{2}(s^{2H} + t^{2H} - |s - t|^{2H}), \quad s, t \geq 0. 
\end{align*}
The self-intersection local time of fBm is defined as
\begin{align*}
	\alpha_t(y) := \int_0^t \int_0^s \delta(B^H_s  - B^H_r - y) \dd r \dd s,
\end{align*}
where $\delta$ is the Dirac delta function. Intuitively, $\alpha_t(0)$ measures the amount of time that the process $(B^H_t)$ spends revisiting prior attained values on the interval $[0,t]$. \\

The object of focus in this paper is the $k$-th derivative of self-intersection local time (DSLT) of fBm, which was introduced in the interesting recent paper \citep{yu2020higher}. To be precise, we define
\begin{align*}
	\alpha_{t,\vep}(y):=\int_0^t \int_0^s f_\vep(B^H_s  - B^H_r - y) \dd r \dd s,
\end{align*}
where $f_\vep(x) := \frac{1}{\sqrt{2 \pi \vep}} e^{-x^2/(2\vep)}$ is the centred Gaussian density with variance $\vep$. Since $f_\vep \to \delta$ as $\vep \downarrow 0$ weakly, then one can think of $\alpha_{t,\vep}$ as an approximation of $\alpha_t$ for small $\vep$. Consider 
\begin{align*}
		\alpha^{(k)}_{t,\vep}(y):= (-1)^k \int_0^t \int_0^s f^{(k)}_\vep(B^H_s  - B^H_r - y) \dd r \dd s,
\end{align*}
where $f_\vep^{(k)}(x) := \frac{\dd^k }{\dd x^k} f_\vep(x)$. The $k$-th derivative of $\alpha_t$ is then naturally defined as a limit (in a sense to be specified later) of $\alpha^{(k)}_{t,\vep}$ as $\vep \downarrow 0$. Yu proved a number of facts related to $\alpha^{(k)}_{t}$ in \citep{yu2020higher}, and in fact defined the process in arbitrarily high dimensions with mixed partial derivatives allowed; however we will be content to remain in one dimension and consider $y=0$. For this reason, from here on in, we will write $\alpha_{t,\vep}^{(k)} \equiv \alpha_{t,\vep}^{(k)} (0)$ and $\alpha_t^{(k)}\equiv \alpha_t^{(k)} (0)$. We will address the following result:
\begin{theorem}[\citep{yu2020higher}] \label{yubaby} 
$\alpha^{(k)}_{t}$ exists in $L^2(\Omega)$ whenever

\begin{equation*}
H < \left \{ \begin{array}{ll}
\frac{2}{2k+1}, & \qquad  \mbox{if $k$ is odd,} \\
\frac{1}{k+1}, & \qquad \mbox{if $k$ is even.} \;
\end{array} \right.
\end{equation*}
\end{theorem}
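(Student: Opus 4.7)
My approach follows the Wiener chaos route flagged in the abstract. The plan is to decompose $f_\vep^{(k)}(B_s^H - B_r^H)$ into its chaos components, integrate in $(r,s)$, use orthogonality of the chaoses to reduce $\Ex[(\alpha_{t,\vep}^{(k)})^2]$ to an explicit sum of four-dimensional integrals, and then read off the admissible range of $H$ from when those integrals remain finite as $\vep \downarrow 0$. An identical computation applied to $\Ex[\alpha_{t,\vep}^{(k)}\alpha_{t,\vep'}^{(k)}]$ will then show that the family is Cauchy in $L^2(\Omega)$.

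Concretely, I would first represent $B_s^H - B_r^H = I_1(\varepsilon_{r,s})$ with $\|\varepsilon_{r,s}\|_{\mathcal H}^2 = (s-r)^{2H}$ in the canonical RKHS $\mathcal H$ of the fBm, and use Stroock's formula together with the identity $\Ex[f_\vep^{(m)}(\sigma Z)] = f_{\vep + \sigma^2}^{(m)}(0)$ for $Z\sim N(0,1)$ to get
\begin{equation*}
f_\vep^{(k)}(B_s^H - B_r^H) = \sum_{n=0}^\infty \frac{f_{\vep + (s-r)^{2H}}^{(k+n)}(0)}{n!}\, I_n(\varepsilon_{r,s}^{\otimes n}).
\end{equation*}
Since $f_a^{(m)}(0) = 0$ for odd $m$, only chaoses with $n \equiv k \pmod{2}$ survive; this parity fact already explains why the admissible range of $H$ depends on the parity of $k$, because for odd $k$ the deterministic $n = 0$ term is absent. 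Orthogonality of multiple Wiener integrals then yields
\begin{equation*}
\Ex[(\alpha_{t,\vep}^{(k)})^2] = \sum_{\substack{n \geq 0\\ n \equiv k \,(2)}} \frac{1}{n!} \int_{\Delta_t^2} f_{\vep + (s-r)^{2H}}^{(k+n)}(0)\, f_{\vep + (s'-r')^{2H}}^{(k+n)}(0)\, \mu(r,s,r',s')^n\, dr\,ds\,dr'\,ds',
\end{equation*}
where $\Delta_t = \{0 < r < s < t\}$ and $\mu = \Ex[(B_s^H - B_r^H)(B_{s'}^H - B_{r'}^H)]$.

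Using the asymptotic $|f_a^{(m)}(0)| \lesssim a^{-(m+1)/2}$ for even $m$, controlling the $n$-th term as $\vep \downarrow 0$ reduces to bounding
\begin{equation*}
\int_{\Delta_t^2} (s-r)^{-(k+n+1)H}(s'-r')^{-(k+n+1)H}\, |\mu|^n \, dr\,ds\,dr'\,ds'.
\end{equation*}
For even $k$ the $n = 0$ chaos is the binding constraint: the $\mu$-free integral is finite exactly when $(k+1)H < 1$, giving $H < 1/(k+1)$. For odd $k$ the leading surviving chaos is $n = 1$, and a sharper estimate on $|\mu|$ than the crude Cauchy--Schwarz bound $|\mu|\leq (s-r)^H(s'-r')^H$ is needed; exploiting either local nondeterminism of fBm or a stochastic Fubini argument that averages $\mu$ against the singular factors $(s-r)^{-(k+2)H}$ recovers the sharper threshold $H < 2/(2k+1)$. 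The tail $n \geq 2$ is dominated using the Cauchy--Schwarz bound above, turning the remaining series into a geometric sum that converges on the same range of $H$.

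The principal obstacle is this sharp integration for the $n = 1$ chaos in the odd-$k$ case: a naive Cauchy--Schwarz only recovers $H < 1/(k+1)$, so attaining $2/(2k+1)$ relies on extracting genuine cancellation from the signed kernel $\mu = \tfrac{1}{2}(|s'-r|^{2H} + |s-r'|^{2H} - |s-s'|^{2H} - |r-r'|^{2H})$, typically through a change of variables to the ordered gaps between $r, r', s, s'$ and a separate analysis of the overlap and non-overlap configurations. Once this estimate is in hand, repeating the chaos computation with two parameters $\vep, \vep'$ and invoking dominated convergence establishes Cauchyness in $L^2$, hence the existence of $\alpha_t^{(k)}$ in $L^2(\Omega)$.
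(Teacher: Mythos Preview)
Your setup matches the paper's: Stroock's formula, the parity selection of chaoses, and the reduction of $\Ex[(\alpha_{t,\vep}^{(k)})^2]$ to a sum over $n$ of integrals of $\mu^n/(\lambda\rho)^{(k+n+1)/2}$ over $D_t^2$ are exactly what the authors do. The gap is in your treatment of the tail. The claim that Cauchy--Schwarz, $|\mu|\le\sqrt{\lambda\rho}$, ``turn[s] the remaining series into a geometric sum'' is not correct. Under that bound the $n$-th integrand is at most $(\lambda\rho)^{-(k+1)/2}$, independent of $n$, but the prefactors coming from $f_a^{(k+n)}(0)$ are $\frac{[(n+k-1)!!]^2}{n!}$, which grow like $n^{k-1/2}$; the resulting series $\sum_n \frac{[(n+k-1)!!]^2}{n!}\gamma^n$ has radius of convergence $1$ and diverges at $\gamma=1$. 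Since $\gamma=\mu/\sqrt{\lambda\rho}$ approaches $1$ on the diagonal of $D_t^2$, the crude bound $|\gamma|\le 1$ cannot close the argument for any $H$. In particular, in the odd case the difficulty is not localized at $n=1$: the entire chaos tail carries the same singularity.

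What the paper does instead is sum the series \emph{before} estimating. The sum is recognized as a Gauss hypergeometric function and, via the Euler transformation, collapses to a polynomial in $\gamma$ times $(1-\gamma^2)^{-(2k+1)/2}$. Only then does local nondeterminism enter, in the form of lower bounds on $\lambda\rho-\mu^2$ over the three ordered configurations of $(r,r',s,s')$ (your ``overlap and non-overlap'' cases), which control the $(1-\gamma^2)^{-(2k+1)/2}$ singularity jointly with the $(\lambda\rho)^{-(k+1)/2}$ factor. This is where the threshold $H<\frac{2}{2k+1}$ emerges for odd $k$; for even $k$ the extra $n=0$ term has no $\mu$ in the numerator to help, and the stricter $H<\frac{1}{k+1}$ is forced by $\int_{D_t}(s-r)^{-(k+1)H}\,dr\,ds<\infty$. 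So your diagnosis of which chaos is binding is right, but the mechanism that handles the tail has to be the closed-form summation plus the $\lambda\rho-\mu^2$ bounds, not termwise Cauchy--Schwarz.
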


We will apply the Wiener chaos expansion, which was not used for this purpose by Yu, in order to give a new proof of this theorem, and in fact will be able to prove the following extension.

\begin{theorem} \label{vara}
If $k\in \naturals$ is even and $\frac{1}{k+1} \leq H < \frac{2}{2k+1}$, then $\alpha^{(k)}_{t,\vep}- \Ex[\alpha^{(k)}_{t,\vep}]$ converges in $L^2(\Omega)$ as $\vep \downarrow 0$.
\end{theorem}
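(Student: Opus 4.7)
The plan is to apply the Wiener chaos expansion to $\alpha^{(k)}_{t,\vep}$ and show that, after subtracting the mean (which is precisely the zeroth-order chaos), the remaining chaos components are $L^2$-Cauchy as $\vep \downarrow 0$ under the weaker hypothesis $H < \frac{2}{2k+1}$. For a centred Gaussian $Y$ with variance $\sigma^2$, repeated integration by parts gives $\Ex[f^{(k)}_\vep(Y)\, H_n(Y/\sigma)] = \sigma^n f^{(k+n)}_{\vep + \sigma^2}(0)$, where $H_n$ is the Hermite polynomial. Applying this with $Y = B^H_s - B^H_r$ and $\sigma^2 = (s-r)^{2H}$, and using the identification $\sigma^n H_n(B^H(h)/\sigma) = I_n(h^{\otimes n})$ for $h = \mathbf{1}_{[r,s]}$, one arrives at the chaos decomposition
\[
\alpha^{(k)}_{t,\vep} \;=\; \sum_{n=0}^{\infty} I_n(g_n^\vep), \qquad g_n^\vep \;=\; \frac{(-1)^k}{n!}\int_0^t\!\!\int_0^s f^{(k+n)}_{\vep + (s-r)^{2H}}(0)\,\mathbf{1}_{[r,s]}^{\otimes n}\,\dd r\,\dd s.
\]
Since $f^{(m)}_t(0) = 0$ for $m$ odd and $k$ is even, only even $n$ contribute, and the $n=0$ term is the deterministic quantity $\Ex[\alpha^{(k)}_{t,\vep}]$.

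Centering therefore removes exactly the zeroth chaos, leaving
\[
\alpha^{(k)}_{t,\vep} - \Ex[\alpha^{(k)}_{t,\vep}] \;=\; \sum_{\substack{n \geq 2 \\ n \text{ even}}} I_n(g_n^\vep).
\]
By orthogonality of the chaoses and the It\^o isometry for multiple Wiener integrals, the $L^2$-Cauchy property reduces to pointwise convergence and uniform summability in $n$ of the norms
\[
\|I_n(g_n^\vep)\|_{L^2(\Omega)}^{2} \;=\; \frac{1}{n!}\int f^{(k+n)}_{\vep + (s-r)^{2H}}(0)\, f^{(k+n)}_{\vep + (s'-r')^{2H}}(0)\, \mu(r,s,r',s')^n \,\dd r\,\dd s\,\dd r'\,\dd s',
\]
where $\mu(r,s,r',s') = \Ex[(B^H_s - B^H_r)(B^H_{s'} - B^H_{r'})]$ and the integration is over $\{0 \leq r \leq s \leq t,\ 0\leq r' \leq s' \leq t\}$. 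Existence of the limits and dominated convergence on $n$ would then yield $L^2$-convergence as $\vep \downarrow 0$.

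The key conceptual point motivating Theorem \ref{vara} is that the restriction $H < \frac{1}{k+1}$ in Theorem \ref{yubaby} for even $k$ arises entirely from the zeroth chaos: the integrand $f^{(k)}_{(s-r)^{2H}}(0) \sim (s-r)^{-(k+1)H}$ ceases to be integrable on $\{r \leq s\}$ precisely when $(k+1)H \geq 1$. Removing this term eliminates the bottleneck, and I expect the chaos norms for $n \geq 2$ to remain finite and summable throughout the larger range $H < \frac{2}{2k+1}$. The strategy for proving this would be to split the four-dimensional region into the standard subregions determined by the ordering of $r,s,r',s'$ (intervals disjoint, overlapping, or nested) and exploit the second-difference structure of $\mu$, which vanishes substantially faster on the diagonal than the crude bound $|\mu| \leq (s-r)^H (s'-r')^H$ indicates.

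The main obstacle will be obtaining sharp enough integrability bounds for $n=2$ in the critical window $(k+1)H \in [1, \frac{2(k+1)}{2k+1})$: the naive estimate $|\mu|^n \leq (s-r)^{nH}(s'-r')^{nH}$ leaves the singularity $(s-r)^{-(k+1)H}(s'-r')^{-(k+1)H}$ uncontrolled, so one must extract the additional two powers of the short-interval lengths coming from the mixed second differences of $x\mapsto x^{2H}$ (this is the same mechanism responsible for the threshold $\frac{2}{2k+1}$ in the odd-$k$ case of Theorem \ref{yubaby}). Once the $n=2$ bound is established, the $n \geq 4$ contributions should be controlled by the factorial $1/n!$ combined with cruder estimates on $|\mu|^n$, giving absolute summability and completing the Cauchy argument.
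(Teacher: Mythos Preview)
Your overall architecture matches the paper's: compute the chaos expansion, recognise the zeroth chaos as $\Ex[\alpha^{(k)}_{t,\vep}]$, and show that the remaining terms are uniformly summable in $L^2$. Your diagnosis of why the even-$k$ threshold in Theorem~\ref{yubaby} sits at $H=\tfrac{1}{k+1}$ is also correct: it is exactly the integrability of the $n=0$ term $\int (s-r)^{-(k+1)H}\,\dd r\,\dd s$ that fails there.

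The gap is in your tail argument. You write that ``the $n\geq 4$ contributions should be controlled by the factorial $1/n!$ combined with cruder estimates on $|\mu|^n$''. But the $1/n!$ is not a net gain: since $f^{(k+n)}_{\sigma^2}(0) = (-1)^{(k+n)/2}(k+n-1)!!/(\sqrt{2\pi}\,\sigma^{k+n+1})$, the coefficient in front of $\int \mu^n(\lambda\rho)^{-(k+n+1)/2}$ is $[(k+n-1)!!]^2/n!$, which by Stirling behaves like a constant times $n^{k-1/2}$. For even $k\geq 2$ this \emph{grows} polynomially. Thus if you bound $|\mu|^n \leq (\lambda\rho)^{n/2}$ (i.e.\ $|\gamma|\leq 1$) for $n\geq 4$, every term reduces to the same integral $\int (\lambda\rho)^{-(k+1)/2}$ multiplied by a growing coefficient, and the series diverges. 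More fundamentally, the correlation coefficient $\gamma=\mu/\sqrt{\lambda\rho}$ approaches $1$ on parts of the diagonal (e.g.\ the nested region $r<r'<s'<s$ with $r'-r$ and $s-s'$ small), so no term-by-term estimate that discards the joint structure of the $\gamma^n$ can sum.

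What the paper does instead is sum \emph{first}: writing everything in terms of $\gamma$, the series $\sum_{l\geq 1} c_l \gamma^{2l}$ is identified (via a hypergeometric identity and Euler's transformation) with a polynomial in $\gamma^2$ divided by $(1-\gamma^2)^{(2k+1)/2}$. The whole problem then collapses to a single integral
\[
\int_{D_t^2} \frac{\gamma^{2}}{(1-\gamma^2)^{(2k+1)/2}\,\lambda^{(k+1)/2}\rho^{(k+1)/2}}\,\dd r\,\dd s\,\dd r'\,\dd s'
\;=\;
\int_{D_t^2} \frac{\mu^{2}\,\lambda^{k/2-1}\rho^{k/2-1}}{(\lambda\rho-\mu^2)^{(2k+1)/2}}\,\dd r\,\dd s\,\dd r'\,\dd s',
\]
and finiteness for $H<\tfrac{2}{2k+1}$ is obtained region by region using the local nondeterminism lower bounds on $\lambda\rho-\mu^2$ (Lemma~\ref{lemma:boundingregions}). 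In other words, the essential ingredient you are missing is not a sharper bound on $\mu$ in the numerator, but a lower bound on $\lambda\rho-\mu^2$ in the denominator after the $n$-sum has been performed; the ``second-difference'' decay of $\mu$ alone does not close the argument.
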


This kind of result is commonly referred to as a {\it Varadhan-type renormalization}, due to its origin in the important result from \citep{varadhan1969appendix}, which addressed the self-intersection local time of Brownian motion in two dimensions. We note also the similarity of \Cref{vara} with the main results in \citep{hu2005renormalized}, which addressed the self-intersection local time of fractional Brownian motion in arbitrarily many dimensions, and similarly showed convergence of the process, renormalized by subtracting the mean, for certain ranges of $H$. We remark that such a renormalization to increase the range of convergence in $H$ for the odd derivatives is not possible: since $\delta^{(k)}$ is odd in this case, the expectation will be 0. In the process of proving these results, we will also deduce the Wiener chaos decompositions of $\alpha^{(k)}_{t}$ and its renormalization (see the statements of \Crefrange{thm:convergenceL2odd}{thm:convergenceL2evenrenorm} below). \\

Our results fit naturally with a number of other results in the field, and for that reason in \Cref{sec:literaturereview} we provide a brief survey of the existing knowledge on this topic. In \Cref{sec:mainresults} we give precise statements of our main results, and in \Cref{sec:proofs} we prove them. The final sections, \Cref{appen:malliavin,appen:finitenessintegrals}, contain a discussion on the Malliavin calculus content that we employ within our methodology, as well as the proofs of a number of technical lemmas respectively.
\begin{remark}[Notation]
We will make use of the following notation extensively. 
\begin{framed}
\begin{enumerate}[label = (\roman*)]
\item $\boH := L^2([0,T])$ is the Hilbert space of square integrable functions on $[0,T]$ with respect to the Lebesgue measure.
\item $L^2(\Omega, \GG, \Pro) \equiv L^2(\Omega)$ denotes the Hilbert space of square integrable random variables on $(\Omega, \GG, \Pro)$, where $\GG$ is the $\sigma$-field generated by the isonormal Gaussian process $W^B = \{ W^B(\phi):= \int_0^T \phi_t \dd B_t : \phi \in \boH\}$.
\item $D_t := \{(r, s) \in \reals^2 : 0 \leq r \leq s \leq t \}$ is a simplex in $\reals^2$.
\item $\naturals = \{1, 2, \dots \}$ and $\naturals_0 := \naturals \cup \{0\}$.
\item $P = \{(n',k') \in \naturals_0^2: n'+k' = 0 \mod 2\}$ is the set of tuples of non-negative integers whose sum is even. 
\item We will utilize the following standard notation from the area of DSLT.
\begin{align*}
	\lambda &:= \Var( B_s^H - B_r^H) = |s - r|^{2H},\\
	\rho &:= \Var( B_{s'}^H - B_{r'}^H) = |s'-r'|^{2H}, \\
	\mu &:= \Cov(B_s^H - B_r^H, B_{s'}^H - B_{r'}^H) = \frac{1}{2}\left (|s - r'|^{2H} + |r - s'|^{2H} - |s - s'|^{2H} - |r - r'|^{2H}\right ).
\end{align*}
\item For a function $f : [a,b]^n \to \reals$, $I_n$ denotes the multiple Wiener integral, 
\begin{align*}
	I_n(f) = \int_{[a,b]^n} f(v_1, \dots, v_n) \dd B_{v_1} \dots \dd B_{v_n}.
\end{align*}
\end{enumerate}
\end{framed}
\end{remark}
\section{Literature review}
\label{sec:literaturereview}
The topic of self-intersection local times of stochastic processes has received a great deal of attention in recent decades. Originally studied for Brownian motion due to connections with theoretical physics (see \citep{edwards1965statistical, varadhan1969appendix, le1986proprietes, le1988fluctuation}), it has been generalized to a wide variety of processes, and has become a major focus of research for a number of theoretical probabilists. This paper lies in the intersection of several different research threads, and we will take the time to discuss these before stating our results.\\

The process which is our focus is fractional Brownian motion. The self-intersection local time of this process was first studied by Rosen in \citep{rosen1987intersection}, and this has led to a large literature on the subject, including \citep{hu2001self, hu2005renormalized, hu2008integral, grothaus2011self, nualart2007intersection, wu2010regularity, oliveira2011intersection, jiang2007collision, chen2011large, chen2011remarks, chen2018renormalized, jaramillo2019functional, rezgui2007renormalization, yu2019smoothness, evans2017spans}. A key difficulty with this process, in comparison to ordinary Brownian motion, is that in general the increments of the process are not independent, and this brings considerable complications into the calculations. The key method for overcoming this difficulty is to use the property of {\it local nondeterminism} (see \citep{berman1973local} and \Cref{lemma:boundingregions} below).\\

An important tool in the analysis of local times and intersection local times is the Wiener chaos expansion. This was first discovered in \citep{nualart1992chaos}, and the chaos expansion has since become a standard method in the field (see \citep{imkeller1995chaos, albeverio1997remark, hu2001self, hu2002chaos, jung2014tanaka, markowsky2012derivative, de2000renormalization, mataramvura2004donsker, bornales2016chaos}, to name a few examples).\\

The derivative of self-intersection local time of Brownian motion was also introduced by Rosen, in \citep{rosen2005derivatives}, and has blossomed into a research topic in its own right (\citep{hong2020derivatives, guo2018higher, guo2019higher, yan2014derivative, yan2015derivative, yan2017derivative, shi2017fractional, shi2020fractional, jaramillo2017asymptotic, jaramillo2019approximation, jung2014tanaka, jung2015holder, markowsky2008proof, markowsky2008renormalization, markowsky2012derivative, yu2019smoothness, yu2020asymptotic, yu2020higher}). The original motivation discussed by Rosen was the Tanaka-style formula

$$ 
\frac{1}{2}\alpha_{t}'(x) + \frac{1}{2}\sgn(x)t =
 \int_0^t L_s^{B_s - x}\dd B_s - \int_{0}^{t} \sgn
(B_{t}-B_{u} - x) \dd u. 
$$

Rosen only stated this formally, but later it was proved rigorously in \citep{markowsky2008proof}. The formal argument for this identity comes from applying Ito's Lemma to the non-differentiable function $\sgn$ and then integrating: since the derivatives of $\sgn$ are the Dirac delta function and its derivatives, we end up with DSLT in the equation. Similar formal calculations applied to $\delta$ itself will require higher order derivatives of self-intersection local time. For instance, the same argument applied directly to $\delta$ yields

$$
\frac{1}{2}\alpha_{t}''(x) + \frac{1}{2}\delta(x)t =
\int_0^t \int_0^s \delta'(B_s - B_u - x)\dd u\dd B_s - \int_{0}^{t} \delta(B_{t}-B_{u} - x) \dd u. 
$$

We note that the final term is simply the local time of the Brownian motion $\hat B_u := B_t-B_{t-u}$ for $0 \leq u \leq t$ at time $t$. However, each of the other terms is problematic: it is not clear that the stochastic integral exists, the term containing $\delta(x)$ without an integral is worrisome, and Yu's results show that $\alpha_{t}''(x)$ does not exist for Brownian motion. It is therefore unclear whether any meaning can be assigned to this identity; however, it is possible that some method of renormalization can be devised which clarifies it.

\section{Main results}
\label{sec:mainresults}
First, we define the following object, which will be utilized in the subsequent results: 
\begin{align*}
	g_{t,\vep}(n, k)& \equiv  g_{t,\vep}(n,k;v_1, \dots, v_n)  \\
	&=  \left (\frac{(-1)^{(3k+n)/2} (n+k - 1)!!  }{n! \sqrt{2 \pi}  }\right ) \left ( \int_0^t \int_0^s \frac{\prod_{j=1}^n K_{r,s}(v_j) }{[(s - r)^{2H} + \vep]^{(k+n+1)/2}} \dd r \dd s \right )\gind{P}{n,k},
\end{align*}
where $K_{r,s}$ is given in \cref{defnKthing}. The results pertaining to $\alpha_{t, \vep}^{(k)}$ and $\alpha_{t}^{(k)}$ will depend on the parity of $k$. 
\begin{lemma}
\label{lemma:wienerchaos}
Let $\hk, k' \in \naturals$. Then $\alpha_{t,\vep}^{(2\hk-1)}$ and $\alpha_{t,\vep}^{(2k')}$ admit the Wiener chaos expansions
\begin{align*}
	\alpha_{t,\vep}^{(2\hk-1)}&= \sum_{m=1}^\infty I_{2m - 1}(g_{t,\vep} (2m - 1, 2\hk-1)),
\end{align*}
and
\begin{align*}
	\alpha_{t,\vep}^{(2k')}&= \sum_{l=0}^\infty I_{2l}(g_{t,\vep} (2l, 2k')).
\end{align*}
\end{lemma}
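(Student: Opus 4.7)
\emph{Proof proposal.} The plan is to compute the Wiener chaos decomposition of $f_\vep^{(k)}(B^H_s - B^H_r)$ pointwise in $(r,s)$, and then pass the deterministic integral $\int_{D_t}(\cdot)\,\dd r\,\dd s$ inside each multiple Wiener integral via a stochastic Fubini argument.

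First I would fix $(r,s) \in D_t$ and represent $B^H_s - B^H_r = I_1(K_{r,s})$, so that $\|K_{r,s}\|_{\boH}^2 = (s-r)^{2H} =: \lambda$. Fourier inversion gives
\begin{align*}
f_\vep^{(k)}(B^H_s - B^H_r) = \frac{1}{2 \pi} \int_\reals (i \xi)^k e^{-\vep \xi^2 / 2} e^{i \xi I_1(K_{r,s})}\, \dd \xi,
\end{align*}
and inserting the Wick exponential identity
\begin{align*}
e^{i \xi I_1(K_{r,s})} = e^{-\xi^2 \lambda / 2} \sum_{n=0}^\infty \frac{(i \xi)^n}{n!}\, I_n(K_{r,s}^{\otimes n})
\end{align*}
reduces matters to evaluating $\frac{1}{2\pi} \int_\reals (i\xi)^{n+k} e^{-(\lambda+\vep)\xi^2/2}\,\dd\xi$ for each $n$. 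By parity this vanishes unless $n+k$ is even, in which case it equals $(-1)^{(n+k)/2}(n+k-1)!!/\bigl[\sqrt{2\pi}\,(\lambda+\vep)^{(n+k+1)/2}\bigr]$, precisely the scalar multiplying $K_{r,s}^{\otimes n}$ in the definition of $g_{t,\vep}(n,k)$ (up to the combinatorial factor $1/n!$ and the external sign).

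Next I would integrate over $(r,s) \in D_t$ and apply stochastic Fubini to commute $I_n$ with the deterministic integral. Absorbing the external sign from the definition of $\alpha_{t,\vep}^{(k)}$ via the identity $(-1)^k (-1)^{(n+k)/2} = (-1)^{(3k+n)/2}$, the $n$-th chaos kernel matches $g_{t,\vep}(n,k;v_1,\dots,v_n)$ exactly, with the indicator $\gind{P}{n,k}$ enforcing the parity constraint. Splitting on the parity of $k$ and reindexing $n = 2m-1$ when $k = 2\hk - 1$, and $n = 2l$ when $k = 2k'$, recovers the two displayed expansions.

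The main technical obstacle is rigorously justifying both interchanges for each fixed $\vep > 0$. For the inner swap of chaos series and $\xi$-integration, partial sums can be bounded in $L^2(\Omega)$ using orthogonality of the chaoses together with the Gaussian factor $e^{-(\lambda+\vep)\xi^2/2}$, which absorbs all polynomial factors in $\xi$. For the outer stochastic Fubini it suffices to check that the candidate kernel lies in $\boH^{\otimes n}$, i.e.\ that
\begin{align*}
\int_{D_t \times D_t} \frac{\mu^n}{[(\lambda+\vep)(\rho+\vep)]^{(n+k+1)/2}}\,\dd r\,\dd s\,\dd r'\,\dd s' < \infty
\end{align*}
for each admissible $n$; this is immediate from the $\vep$-regularisation together with the local nondeterminism bounds alluded to around \Cref{lemma:boundingregions}, and is of the same flavour as the technical estimates deferred to \Cref{appen:finitenessintegrals}.
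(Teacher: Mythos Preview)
Your argument is correct and arrives at the same kernels, but the route differs from the paper's. The paper invokes Stroock's formula (\Cref{thm:stroock}): it computes the iterated Malliavin derivatives $D^n f_\vep^{(k)}(B^H_s - B^H_r) = f_\vep^{(k+n)}(B^H_s - B^H_r)(K_{r,s})^{\otimes n}$, then takes expectations using the Fourier representation of $f_\vep^{(k+n)}$ to produce $h_\vep(n,k)$, and finally integrates over $D_t$. You instead start from the Fourier representation of $f_\vep^{(k)}$ and expand $e^{i\xi I_1(K_{r,s})}$ via the Wick exponential identity, which directly yields the chaos expansion without any reference to Malliavin derivatives or Stroock's formula. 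Both paths reduce to the same Gaussian moment computation $\frac{1}{2\pi}\int_\reals (i\xi)^{n+k} e^{-(\lambda+\vep)\xi^2/2}\,\dd\xi$, so the algebra and the parity splitting coincide from that point on. Your approach is arguably more elementary in that it bypasses the Malliavin calculus apparatus of \Cref{appen:malliavin}; the paper's approach, on the other hand, fits more naturally with the later use of $\mathbb{D}^{1,2}$ in \Cref{thm:convergenceD2}. One minor remark: the finiteness check you mention for the stochastic Fubini step is actually trivial once $\vep>0$ is fixed, since the denominator is bounded below by $\vep^{(n+k+1)/2}$ and $\mu$ is bounded on $D_t^2$; there is no need to appeal to local nondeterminism here.
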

\begin{theorem}
\label{thm:convergenceL2odd}
Let $k = 2\hk - 1$, where $\hk \in \naturals$. Suppose $H \in (0, \frac{2}{4\hk - 1})$. Then $\alpha_{t}^{(2\hk-1)}$ exists in $L^2(\Omega)$. Moreover, $\alpha_{t}^{(2\hk-1)}$ exhibits the Wiener chaos expansion
\begin{align*}
	\alpha_{t}^{(2\hk-1)}&= \sum_{m=1}^\infty I_{2m - 1}(g_{t,0} (2m - 1, 2\hk -1)).
\end{align*}
\end{theorem}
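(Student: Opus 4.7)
The plan is to combine the Wiener chaos expansion from Lemma~\ref{lemma:wienerchaos} with the orthogonality of Wiener chaoses and the isometry $\|I_n(\tilde f)\|_{L^2(\Omega)}^2 = n!\,\|\tilde f\|_{\boH^{\otimes n}}^2$ for symmetric kernels $\tilde f$. This reduces the existence of $\alpha_t^{(2\hk-1)}$ in $L^2(\Omega)$ to showing that
\[
\|\alpha_{t,\vep}^{(2\hk-1)}-\alpha_{t,\vep'}^{(2\hk-1)}\|_{L^2(\Omega)}^2 = \sum_{m=1}^\infty (2m-1)!\,\|g_{t,\vep}(2m-1,2\hk-1)-g_{t,\vep'}(2m-1,2\hk-1)\|_{\boH^{\otimes(2m-1)}}^2
\]
tends to $0$ as $\vep,\vep'\downarrow 0$, noting that $g_{t,\vep}(n,k;v_1,\ldots,v_n)$ is manifestly symmetric in $v_1,\ldots,v_n$. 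A sufficient strategy is to find a summable-in-$m$ and uniform-in-$\vep$ majorant for the summands, apply dominated convergence in $m$, and within each chaos component apply dominated convergence in $(r,s,r',s')$ against the $\vep = 0$ integrand.

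The kernel norm is computed by Fubini together with the fBm representation $B_s^H - B_r^H = I_1(K_{r,s})$, which gives $\langle K_{r,s},K_{r',s'}\rangle_{\boH} = \mu$. Consequently
\[
\|g_{t,\vep}(2m-1,2\hk-1)\|_{\boH^{\otimes(2m-1)}}^2 = c_m^2 \int_{D_t^2}\frac{\mu^{2m-1}}{[(\lambda+\vep)(\rho+\vep)]^{(2m+2\hk-1)/2}}\,\dd r\,\dd s\,\dd r'\,\dd s',
\]
where $c_m = (2m+2\hk-3)!!/((2m-1)!\sqrt{2\pi})$. Dropping the $\vep$'s gives a pointwise larger integrand, so uniformity in $\vep$ reduces to integrability of the limiting $\vep = 0$ expression. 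A short Stirling computation using $(2m-1)! = (2m-1)!!\,(2m-2)!!$ shows $(2m-1)!\,c_m^2 = O(m^{2\hk-3/2})$, so summability of the series amounts to sufficient decay of the integral in $m$.

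The main obstacle is to obtain this decay with the \emph{sharp} Hurst range $H < 2/(4\hk-1)$. The naive Cauchy--Schwarz bound $|\mu|\leq \sqrt{\lambda\rho}$ removes the $m$-dependence entirely from the integrand and leaves only $(\lambda\rho)^{-\hk}$, which is integrable over $D_t^2$ only in the strictly smaller range $H < 1/(2\hk)$. To recover the optimal threshold one must retain the $m$-dependent factor by exploiting local nondeterminism of fBm (Lemma~\ref{lemma:boundingregions}): splitting $D_t^2$ according to the ordering of $\{r,s,r',s'\}$ and Taylor-expanding $\mu$ in the smallest gap yields a bound of the shape $|\mu|/\sqrt{\lambda\rho} \leq 1 - c\,\Delta^{2H}/(\lambda\wedge\rho)$, where $\Delta$ is that smallest gap, so that $(|\mu|/\sqrt{\lambda\rho})^{2m-1}$ contributes an exponential-in-$m$ damping in the near-diagonal region that was responsible for the failure of the crude bound. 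Summing the resulting series against the polynomial prefactor $(2m-1)!\,c_m^2$ and integrating region by region produces the sharp Hurst range. Once this majorant is in hand, dominated convergence gives $g_{t,\vep}(2m-1,2\hk-1)\to g_{t,0}(2m-1,2\hk-1)$ in $\boH^{\otimes(2m-1)}$ for each $m$, and the summable majorant transfers the Cauchy property to the full series, so that the $L^2(\Omega)$ limit of $\alpha_{t,\vep}^{(2\hk-1)}$ is the claimed $\sum_{m=1}^\infty I_{2m-1}(g_{t,0}(2m-1,2\hk-1))$.
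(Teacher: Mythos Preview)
Your overall architecture matches the paper's: use Lemma~\ref{lemma:wienerchaos}, orthogonality and the isometry to reduce to a series of kernel norms, compute $\langle K_{r,s},K_{r',s'}\rangle_{\boH}=\mu$, maximize at $\vep=0$, and finish with Lemma~\ref{lemma:sufficientconvergence}. Where you diverge from the paper is in how the $m$-series is controlled. The paper does \emph{not} bound the terms individually via an ``exponential-in-$m$ damping'' mechanism. Instead it interchanges sum and integral and sums the series in closed form: writing $\gamma=\mu/\sqrt{\lambda\rho}$, the coefficient sum is identified with $\gamma\,{}_2F_1(\hk+\tfrac12,\hk+\tfrac12;\tfrac32;\gamma^2)$, and Euler's transformation collapses this to $(1-\gamma^2)^{-(4\hk-1)/2}$ times a polynomial in $\gamma^2$. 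One is then left with the single integral
\[
\int_{D_t^2}\frac{|\gamma|}{(1-\gamma^2)^{(4\hk-1)/2}\lambda^{\hk}\rho^{\hk}}\,\dd r\,\dd s\,\dd r'\,\dd s'
=\int_{D_t^2}\frac{|\mu|\,\lambda^{\hk-1}\rho^{\hk-1}}{(\lambda\rho-\mu^2)^{(4\hk-1)/2}}\,\dd r\,\dd s\,\dd r'\,\dd s',
\]
whose finiteness for $H<\frac{2}{4\hk-1}$ is established region-by-region in Lemma~\ref{lemma:finitenessL2odd} using the local nondeterminism bounds of Lemma~\ref{lemma:boundingregions}.

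The specific step you sketch, namely ``Taylor-expanding $\mu$ in the smallest gap yields $|\mu|/\sqrt{\lambda\rho}\le 1-c\,\Delta^{2H}/(\lambda\wedge\rho)$'', is the one place where your proposal is not on solid ground: Lemma~\ref{lemma:boundingregions} gives lower bounds on $\lambda\rho-\mu^2$, hence on $(1-\gamma^2)\lambda\rho$, and the resulting lower bound on $1-\gamma^2$ is \emph{not} of the uniform shape $c\,\Delta^{2H}/(\lambda\wedge\rho)$ (e.g.\ on $D^2_{1,t}$ one gets $1-\gamma^2\ge K\big(c^{2H}/(b+c)^{2H}+a^{2H}/(a+b)^{2H}\big)$). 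Your Stirling estimate $(2m-1)!\,c_m^2\asymp m^{2\hk-3/2}$ is correct and, if you simply swap the sum and the integral and sum $\sum_m m^{2\hk-3/2}|\gamma|^{2m-1}\asymp |\gamma|(1-\gamma^2)^{-(4\hk-1)/2}$, you land on exactly the same key integral the paper studies; but the mechanism you describe (pointwise damping of $|\gamma|^{2m-1}$ followed by summation in $m$ term-by-term) would need that unproved pointwise bound on $|\gamma|$ and, as written, is a gap. The fix is to sum first and apply local nondeterminism to $(\lambda\rho-\mu^2)^{-(4\hk-1)/2}$ afterwards, which is precisely the paper's route.
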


\begin{theorem}
\label{thm:convergenceL2even}
Let $k = 2k'$, where $k' \in \naturals$. Suppose $H \in (0,\frac{1}{2k'+1})$. Then $\alpha_t^{(2k')}$ exists in $L^2(\Omega)$. Moreover, $\alpha_t^{(2k')}$ exhibits the Wiener chaos expansion
\begin{align*}
	\alpha_{t}^{(2k')}&= \sum_{l=0}^\infty I_{2l}(g_{t,0} (2l, 2k')).
\end{align*}
\end{theorem}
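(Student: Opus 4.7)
My plan is to work directly with the chaos expansion provided by \Cref{lemma:wienerchaos} and leverage the Wiener isometry together with the orthogonality of distinct chaoses. A first observation is that $g_{t,\vep}(2l, 2k'; v_1, \dots, v_{2l})$ is symmetric in $v_1, \dots, v_{2l}$, being a tensor product of $2l$ copies of $K_{r,s}$. Combined with the defining relation $\int_0^T K_{r,s}(v) K_{r',s'}(v)\,\dd v = \mu$, the Wiener isometry gives
\begin{align*}
\Ex\bigl[I_{2l}(g_{t,\vep}(2l, 2k'))^2\bigr] = (2l)!\,C_{l,k'}^2 \int_{D_t^2} \frac{\mu^{2l}}{[(\lambda+\vep)(\rho+\vep)]^{(2l+2k'+1)/2}}\,\dd r\,\dd s\,\dd r'\,\dd s',
\end{align*}
where $C_{l,k'} = (2l+2k'-1)!!/[(2l)!\sqrt{2\pi}]$. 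Since $\mu^{2l} \geq 0$, the integrand is monotonically increasing as $\vep \downarrow 0$.

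Via this formula, $L^2(\Omega)$-convergence of $\alpha_{t,\vep}^{(2k')}$ reduces to two claims: (i) for each fixed $l$, $g_{t,\vep}(2l, 2k') \to g_{t,0}(2l, 2k')$ in $\boH^{\otimes 2l}$; and (ii) the series
\begin{align*}
\sum_{l=0}^\infty (2l)!\,C_{l,k'}^2 \int_{D_t^2} \frac{\mu^{2l}}{(\lambda\rho)^{(2l+2k'+1)/2}}\,\dd r\,\dd s\,\dd r'\,\dd s'
\end{align*}
is finite under $H < \frac{1}{2k'+1}$. Claim (i) will follow from dominated convergence using $|g_{t,0}(2l, 2k')|^2$ as a dominant, once (ii) certifies its integrability. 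Given (ii), monotonicity yields uniform summability in $\vep$, and a second application of dominated convergence---this time term-by-term on the chaos expansion itself---delivers $\alpha_{t,\vep}^{(2k')} \to \sum_{l \geq 0} I_{2l}(g_{t,0}(2l, 2k'))$ in $L^2(\Omega)$, identifying the limit and its chaos decomposition simultaneously.

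The main obstacle is establishing claim (ii). I would partition $D_t^2$ according to the relative ordering of $r, s, r', s'$ and invoke the local nondeterminism estimates of \Cref{lemma:boundingregions} to control $|\mu|$ in terms of $\lambda$ and $\rho$ region-by-region; LND typically produces bounds of the form $|\mu|^{2l} \leq c^l (\lambda\rho)^l$ together with geometric decay in $l$ on regions where the increments have substantially disjoint support, and these should tame the combinatorial prefactor $(2l)!\,C_{l,k'}^2 = [(2l+2k'-1)!!]^2/[(2l)!\cdot 2\pi]$ after Stirling-type estimates. The binding constraint arises from the $l=0$ summand $\bigl(\int_{D_t}(s-r)^{-H(2k'+1)}\,\dd r\,\dd s\bigr)^2$, which is finite precisely when $H(2k'+1) < 1$, accounting exactly for the hypothesis $H < \frac{1}{2k'+1}$. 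The delicate part is verifying that no higher chaos term imposes a stricter bound on $H$: the LND-produced decay in $l$ must overcome the factorial blowup in the combinatorial prefactor, and this is where the bulk of the region-by-region estimation and Stirling bookkeeping must concentrate.
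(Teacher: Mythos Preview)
Your reduction to claim (ii) via the Wiener isometry, monotonicity in $\vep$, and \Cref{lemma:sufficientconvergence} is exactly how the paper sets things up. The gap is in your proposed mechanism for establishing (ii). You suggest bounding each summand $\int_{D_t^2}\mu^{2l}/(\lambda\rho)^{l+k'+1/2}$ separately using ``LND bounds of the form $|\mu|^{2l}\le c^l(\lambda\rho)^l$'' and then summing. But local nondeterminism (\Cref{lemma:boundingregions}) controls $\lambda\rho-\mu^2$ from below, not $|\mu|$ from above; the only general upper bound on $|\mu|$ is Cauchy--Schwarz, $|\gamma|\le 1$, which yields no decay in $l$. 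On the disjoint region $D_{3,t}^2$ LND does give $\gamma^2\le 1-K$, hence geometric decay; but on $D_{1,t}^2$ and $D_{2,t}^2$ the correlation $\gamma$ can approach $1$ (let $a,c\to 0$ in the notation of \Cref{lemma:boundingregions}), so no $c<1$ is available there. Meanwhile the prefactor $(2l)!\,C_{l,k'}^2=[(2l+2k'-1)!!]^2/[(2l)!\,2\pi]$ grows like $l^{2k'-1/2}$ (polynomially, not factorially as you wrote), and a term-by-term bound without decay cannot be summed against it.

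The paper avoids this by interchanging sum and integral first (everything is nonnegative) and recognizing the resulting series in $\gamma$ as a Gauss hypergeometric function, namely a constant multiple of $F_{1,2}(k'+\tfrac12,k'+\tfrac12;\tfrac12;\gamma^2)$. Euler's transformation $F_{1,2}(a,b;c;z)=(1-z)^{c-a-b}F_{1,2}(c-a,c-b;c;z)$ then produces $(1-\gamma^2)^{-(4k'+1)/2}$ times a \emph{terminating} polynomial in $\gamma^2$ (since $c-a=c-b=-k'$ is a nonpositive integer). This isolates the entire singularity at $\gamma=1$ in the explicit factor $(1-\gamma^2)^{-(4k'+1)/2}=(\lambda\rho)^{(4k'+1)/2}/(\lambda\rho-\mu^2)^{(4k'+1)/2}$, and \emph{now} LND applies directly to the denominator. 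For the non-renormalized statement the polynomial includes a nonzero constant term, so the dominant integral is $\int_{D_t^2}\lambda^{k'}\rho^{k'}/(\lambda\rho-\mu^2)^{(4k'+1)/2}$; a region-by-region analysis parallel to \Cref{lemma:finitenessL2even} shows this is finite precisely when $H<\frac{1}{2k'+1}$, confirming your intuition about the $l=0$ term being binding---but via an argument that actually closes.
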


We reiterate that the existence parts of \Cref{thm:convergenceL2odd,thm:convergenceL2even} were proved in \citep{yu2020higher} by different methods, although the formulas for the Wiener chaos expansions are new. The following result is the Varadhan-type of renormalization for the even derivatives. It is essentially a restatement of \Cref{vara}, but also includes the expression for the chaos expansion of the renormalized process.

\begin{theorem}
\label{thm:convergenceL2evenrenorm}
Let $k = 2k'$, where $k' \in \naturals$. Suppose $H \in (0,\frac{2}{4k'+1})$. Then $\alpha_t^{(2k')} - \Ex[\alpha_t^{(2k')}]$ exists in $L^2(\Omega)$. Moreover, $\alpha_t^{(2k')} - \Ex[\alpha_t^{(2k')}]$ exhibits the Wiener chaos expansion
\begin{align*}
	\alpha_{t}^{(2k')} - \Ex[\alpha_t^{(2k')}]&= \sum_{l=1}^\infty I_{2l}(g_{t,0} (2l, 2k')).
\end{align*}
\end{theorem}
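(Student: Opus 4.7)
The plan is to adapt the Wiener-chaos approach from the proofs of \Cref{thm:convergenceL2odd,thm:convergenceL2even}. From the expansion in \Cref{lemma:wienerchaos}, the $l=0$ term $I_0(g_{t,\vep}(0,2k'))$ is just the deterministic constant $g_{t,\vep}(0,2k')$. Since every higher multiple integral $I_{2l}(g_{t,\vep}(2l,2k'))$ with $l\ge 1$ has zero expectation, this constant must coincide with $\Ex[\alpha_{t,\vep}^{(2k')}]$, so that
\begin{align*}
	\alpha_{t,\vep}^{(2k')} - \Ex[\alpha_{t,\vep}^{(2k')}] = \sum_{l=1}^\infty I_{2l}(g_{t,\vep}(2l,2k')).
\end{align*}
By orthogonality of distinct Wiener chaoses and the isometry for symmetric multiple integrals, $L^2(\Omega)$-convergence as $\vep\downarrow 0$ reduces to controlling the nonnegative series
\begin{align*}
	\sum_{l=1}^\infty (2l)!\,\|g_{t,\vep}(2l,2k')\|_{\boH^{\otimes 2l}}^2
\end{align*}
uniformly in $\vep\ge 0$ and showing it is continuous in $\vep$ at $0$.

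Using the identity $\langle K_{r,s}, K_{r',s'}\rangle_{\boH} = \mu$ (which follows from the $\boH$-representation of the increments of $B^H$), each term in the series equals a combinatorial constant $C(l,k')$ times $\int_{D_t\times D_t} \mu^{2l}/[(\lambda+\vep)(\rho+\vep)]^{(2l+2k'+1)/2}\,dr\,ds\,dr'\,ds'$. Interchanging summation and integration (legal by positivity) produces a single integrand $S_\vep(\lambda,\rho,\mu)$ that is recognisable as
\begin{align*}
	S_\vep = \Cov\bigl(f_\vep^{(2k')}(B^H_s-B^H_r),\, f_\vep^{(2k')}(B^H_{s'}-B^H_{r'})\bigr),
\end{align*}
the covariance of the $(2k')$-th derivative of the mollified Gaussian density evaluated at the two correlated fBm increments. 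This covariance admits a closed form as a rational function of $\lambda+\vep, \rho+\vep, \mu$ involving inverse powers of $(\lambda+\vep)(\rho+\vep)-\mu^2$; crucially, as $\mu\to 0$, $S_\vep$ vanishes at order $\mu^2$, providing a cancellation absent from the removed $l=0$ term $C(0,k')\,[(\lambda+\vep)(\rho+\vep)]^{-(2k'+1)/2}$.

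The analytic crux is then to verify uniform-in-$\vep$ integrability of $S_\vep$ on $D_t\times D_t$ under $H<\frac{2}{4k'+1}$. I would partition $D_t\times D_t$ by the ordering of $r,s,r',s'$ and apply local nondeterminism via \Cref{lemma:boundingregions} in each region. In regimes where the two increments are well separated, LND yields $\mu^2\le (1-c)\lambda\rho$ for some $c>0$, which combined with the $\mu^2$-gain gives integrability easily. The principal obstacle lies in the clustered regimes, where $r,s,r',s'$ coalesce and the ratio $\mu^2/(\lambda\rho)$ approaches $1$: there one must combine the LND-based bound on $|\mu|$ in terms of interval lengths and their separation with the $\mu^2$ cancellation inside $S_\vep$ to extract an integrable singularity exactly at the enlarged threshold $H<\frac{2}{4k'+1}$. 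Once this uniform estimate is in hand, a standard dominated-convergence argument interchanges the limit $\vep\downarrow 0$ with both the integral over $D_t\times D_t$ and the sum over $l$, identifying the $L^2$-limit with $\sum_{l=1}^\infty I_{2l}(g_{t,0}(2l,2k'))$, in the same spirit as the Varadhan-type renormalization carried out in \citep{hu2005renormalized}.
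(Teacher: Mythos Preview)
Your proposal is correct and follows the same arc as the paper: drop the zeroth chaos to remove the expectation, bound the $L^2$ norm of the remainder by orthogonality, reduce to an integral over $D_t^2$, and verify finiteness via the LND estimates of \Cref{lemma:boundingregions} on the three ordering regions (what you call the ``clustered regimes'' is exactly the content of \Cref{lemma:finitenessL2even}). The one substantive difference is in how the summed integrand is made explicit: where you interpret $\sum_{l\ge 1}(2l)!\|g_{t,\vep}(2l,2k')\|^2_{\boH^{\otimes 2l}}$ probabilistically as the covariance $S_\vep$ and invoke its $\mu^2$-vanishing qualitatively, the paper sums the series in closed form as $C\bigl({}_2F_1(k'+\tfrac12,k'+\tfrac12;\tfrac12;\gamma^2)-1\bigr)$ and applies Euler's transformation to obtain $\frac{1}{(1-\gamma^2)^{(4k'+1)/2}}\sum_{d=1}^{k'}D_d(k')\gamma^{2d}$. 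This hypergeometric route makes your $\mu^2$-cancellation explicit (the sum begins at $\gamma^{2}$) and simultaneously pins down the exponent $(4k'+1)/2$ on $1-\gamma^2$, which is what produces the sharp threshold $H<\tfrac{2}{4k'+1}$; the region-by-region work you sketch is then carried out against this single explicit integrand rather than against an abstract covariance. Both viewpoints are equivalent, but the explicit algebraic form is what makes the estimates in \Cref{lemma:finitenessL2even} clean.
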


This result clears up an apparent anomaly in Yu's paper \citep{yu2020higher}, namely that the formula for the critical values of $H$ were different for odd and even $k$. We note that now, after the renormalization, in both cases the critical value lies at $H = \frac{2}{2k+1}$. For self-intersection local time (SLT) in the planar case, the Varadhan renormalization can be carried out by subtracting out the part of the self-intersection in $\{0 \leq u \leq s \leq t\}$ corresponding to the spurious intersection on the set $\{u=s\}$; this is explained clearly in \citep{rosen1986tanaka} by means of a Tanaka formula. We do not know whether any such interpretation is possible for the even derivatives covered here; a possible avenue towards this would be some sort of Tanaka formula, as described at the end of \Cref{sec:literaturereview}. In the cases of the odd $k$, symmetry and the fact that $\delta^{(k)}$ is odd implies that the expectation of $\alpha^{(k)}_{t,\vep}$ is zero, and there is therefore no place for renormalization. This difference between even and odd derivatives is mentioned in \cite{rosen2005derivatives} when comparing the DSLT of Brownian motion with one derivative in one dimension against SLT in two dimensions. \\

The Wiener chaos expansions given above allow us to study regularity properties of DSLT. The space $\mathbb{D}^{1,2}$ is the space of square integrable Malliavin differentiable random variables, see \Cref{defn:Dkpspace}. One can utilize the Wiener chaos expansion along with \Cref{prop:D12space} in order to confirm whether a random variable belongs in $\mathbb{D}^{1,2}$. We will prove the following.
\begin{theorem}
\label{thm:convergenceD2}
\noindent
\begin{enumerate}[label = (\roman*), ref = \roman*]
    \item Let $k = 2\hk - 1$, where $\hk \in \naturals$. Suppose $H \in (0, \frac{2}{4\hk +1})$. Then $\alpha_{t}^{(2\hk-1)} \in \mathbb{D}^{1,2}$. \label{thm:convergenceD2odd}

    \item Let $k = 2k'$, where $k' \in \naturals$. Suppose $H \in (0, \frac{2}{4k' +3})$. Then $\alpha_{t}^{(2k')} -  \Ex[\alpha_t^{(2k')}] \in \mathbb{D}^{1,2}$. \label{thm:convergenceD2even}

\end{enumerate}
\end{theorem}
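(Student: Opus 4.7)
The plan is to apply the Wiener-chaos characterization of $\mathbb{D}^{1,2}$ provided by \Cref{prop:D12space}: an element $F = \sum_{n \geq 0} I_n(f_n) \in L^2(\Omega)$ lies in $\mathbb{D}^{1,2}$ precisely when $\sum_{n \geq 1} n \cdot n! \|f_n\|_{\boH^{\otimes n}}^2 < \infty$. Since the chaos kernels of $\alpha_t^{(2\hk-1)}$ and $\alpha_t^{(2k')} - \Ex[\alpha_t^{(2k')}]$ are already identified by \Cref{thm:convergenceL2odd} and \Cref{thm:convergenceL2evenrenorm}, the problem collapses to verifying
\[ \sum_{m=1}^\infty (2m-1)(2m-1)! \bigl\|g_{t,0}(2m-1, 2\hk-1)\bigr\|_{\boH^{\otimes(2m-1)}}^2 < \infty \]
under $H < \tfrac{2}{4\hk+1}$ in the odd case, together with the analogous series $\sum_{l \geq 1} 2l\,(2l)!\,\|g_{t,0}(2l,2k')\|_{\boH^{\otimes 2l}}^2 < \infty$ under $H < \tfrac{2}{4k'+3}$ in the even case.

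These are precisely the series that appeared in the $L^2$ proofs of \Cref{thm:convergenceL2odd} and \Cref{thm:convergenceL2evenrenorm}, inflated by a factor equal to the chaos order. I would therefore reuse the same $\boH^{\otimes n}$-norm computation, in which $n!\|g_{t,0}(n,k)\|^2$ is reduced, after evaluating the Gaussian integrals over the $v_j$ variables, to an iterated integral on $D_t \times D_t$ whose integrand is a ratio of powers of the variances $\lambda, \rho$ and the covariance $\mu$. Local nondeterminism (\Cref{lemma:boundingregions}) then controls $\mu$ in terms of $\sqrt{\lambda\rho}$, leaving a scalar integral whose $n$-asymptotic is governed by $H$. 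In the preceding theorems the summands decayed just quickly enough to be summable exactly when $H < \tfrac{2}{2k+1}$; rerunning the same calculation while tracking the extra factor of $n$ shifts the summability threshold down to $H < \tfrac{2}{2k+3}$, which is exactly the content of \Cref{thm:convergenceD2}.

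The main obstacle will be this quantitative refinement in $n$. Because the $L^2$ series is borderline at the $L^2$-critical value of $H$, an extra factor of $n$ forces divergence there, so one must establish a bound of the form $(2m-1)!\|g_{t,0}(2m-1,2\hk-1)\|^2 \lesssim m^{-\gamma(H)}$ with $\gamma(H) > 2$ throughout the range $H < \tfrac{2}{2k+3}$, and analogously in the even case. This requires carefully propagating the combinatorial prefactor $((n+k-1)!!/n!)^2$ and the diagonal behaviour of the integrand near $\{r=s,\, r'=s'\}$ through the estimates without wasteful losses, and matching it against the geometric decay extracted from the $\mu$-factor by local nondeterminism. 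Once the sharpened bound is in place, summation is routine and \Cref{prop:D12space} delivers both (\ref{thm:convergenceD2odd}) and (\ref{thm:convergenceD2even}).
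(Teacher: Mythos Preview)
Your setup is correct: the task is exactly to control $\sum_n n\cdot n!\|g_{t,0}(n,k)\|_{\boH^{\otimes n}}^2$, and the chaos kernels come from \Cref{thm:convergenceL2odd,thm:convergenceL2evenrenorm}. But the method you sketch---extracting a term-by-term decay rate $(2m-1)!\|g_{t,0}\|^2 \lesssim m^{-\gamma(H)}$ and then summing---is not what the paper does, and would require new work that neither you nor the paper have carried out (the $L^2$ proofs never produce such pointwise-in-$m$ bounds; they sum first via hypergeometric identities).

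The paper's route is more direct. Starting from \cref{chaosgengamma}, the extra factor of $n$ is absorbed by the identity
\[
\sum_{m\ge 1} 2m\,a_m\,\gamma^{2m-1} \;=\; \frac{\dd}{\dd\gamma}\Bigl(\gamma\sum_{m\ge 1} a_m\,\gamma^{2m-1}\Bigr),
\]
so one simply differentiates the already-known closed form $\gamma\,F_{1,2}(\hk+\tfrac12,\hk+\tfrac12;\tfrac32;\gamma^2) = (1-\gamma^2)^{-(4\hk-1)/2}\sum_d C_d(\hk)\gamma^{2d+1}$. Differentiation produces two integrals over $D_t^2$: one is \cref{keyintegralodd} itself (finite by \Cref{lemma:finitenessL2odd}), and the other has integrand $\gamma^3(1-\gamma^2)^{-(4\hk+1)/2}\lambda^{-\hk}\rho^{-\hk}$. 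After the crude bound $|\mu|^3 \le C\mu^2$, this second integrand becomes exactly the one treated in \Cref{lemma:finitenessL2even}, finite for $H<\tfrac{2}{4\hk+1}$. The even case is symmetric: the new integral is bounded by the key integral of \Cref{lemma:finitenessL2odd} with $\hk$ replaced by $k'+1$, yielding the threshold $\tfrac{2}{4k'+3}$. Thus no new integral estimation is needed at all---the odd and even lemmas handle each other's $\mathbb{D}^{1,2}$ case. Your proposal misses this cross-referencing structure and the derivative trick that makes it possible.
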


Note that, in both cases, the relevant process associated to the $k$-th derivative belongs to $\mathbb{D}^{1,2}$ whenever $H \in (0, \frac{2}{2k+3})$. We remark that similar questions for SLT were considered in \citep{hu2002chaos, hu2007regularity}.
\section{Proofs}
\label{sec:proofs}
\subsection*{Proof of \Cref{lemma:wienerchaos}}
The main tools are \Cref{thm:stroock} (Stroock's formula), as well as the Fourier identity 
\begin{align*}
	f_\vep(x) =  \frac{1}{2 \pi} \int_\reals e^{i p x} e^{-\vep p^2/2} \dd p
\end{align*}
which is a simple consequence of the characteristic function of a $\NN(0, \vep)$ random variable. This leads to the representation 
\begin{align*}
	f^{(k)}_\vep(x) = \frac{i^k}{2 \pi} \int_\reals p^k e^{i p x} e^{-\vep p^2/2} \dd p.
\end{align*}
Since 
\begin{align*}
	\alpha_{t,\vep}^{(k)} = (-1)^k \int_0^t \int_0^s f^{(k)}_\vep(B^H_s  - B^H_r) \dd r \dd s,
\end{align*}
then obtaining the Wiener chaos expansion of $f_\vep^{(k)}(B^H_s - B^H_r)$ will in turn result in the Wiener chaos expansion of $\alpha_{t,\vep}^{(k)}$. By \Cref{thm:stroock} (Stroock's formula),
\begin{align*}
		f_\vep^{(k)}(B^H_s - B^H_r) &= \sum_{n=0}^\infty I_{n}(h_\vep(n,k))
\end{align*}
where
\begin{align*}
	h_\vep(n,k) \equiv h_\vep(n,k;s,r;v_1, \dots, v_n) =  \frac{1}{n!} \Ex \big [D^{n}_{v_1, \dots , v_n} f_\vep^{(k)}(B^H_s - B^H_r)\big ].
\end{align*}
From \Cref{appen:MalliavinfBm}, we have that 
\begin{align*}
	D^{n} f_\vep^{(k)}(B^H_s - B^H_r) = f_\vep^{(k+n)}(B_s^H-B_r^H) (K_{r,s})^{\otimes n}, 
\end{align*}
where $K_{r,s}$ is defined in \cref{defnKthing}.
Thus 
\begin{align*}
 	\Ex \big [D^{n}_{v_1, \dots , v_n} f_\vep^{(k)}(B^H_s - B^H_r)\big ] = \Ex \big [f^{(k+n)}_\vep (B^H_s - B^H_r) \big ]\prod_{j=1}^n K_{r,s}(v_j).
\end{align*}
Using the Fourier identity, we have that 
\begin{align*}
	 \Ex \big [ f^{(k+n)}_\vep (B^H_s - B^H_r) \big ] &=  \frac{i^{k+n}}{2 \pi} \int_\reals p^{k+n} \Ex \big [e^{i p (B^H_s - B^H_r)}\big ] e^{-\vep p^2/2} \dd p \\
	 						&=  \frac{i^{k+n}}{2 \pi} \int_\reals p^{k+n} e^{-\rho^2 [(s - r)^{2H} + \vep]/2 }  \dd p.
\end{align*}
Changing variable $p \leftarrow p \left ( (s - r)^{2H} + \vep \right )^{1/2}$, then
\begin{align*}
	\Ex \big [ f^{(k+n)}_\vep (B^H_s - B^H_r) \big ] & = \frac{i^{k+n}}{2 \pi  [(s - r)^{2H} + \vep]^{(k+n+1)/2}} \int_\reals p^{k+n} e^{-\rho^2/2 }  \dd p.
\end{align*}
The preceding integral is $0$ if $k+n$ is odd. Recall $P$ is the set of tuples of non-negative integers whose sum is even. Then
\begin{align*}
	\Ex \big [ f^{(k+n)}_\vep (B^H_s - B^H_r) \big ] & = \frac{i^{k+n}\sqrt{2 \pi} (n+k - 1)!!}{2 \pi  [(s - r)^{2H} + \vep]^{(k+n+1)/2}}\gind{P}{n,k} \ \\
						 & = \frac{(-1)^{(k+n)/2} (n+k - 1)!!}{\sqrt{2 \pi}  [(s - r)^{2H} + \vep]^{(k+n+1)/2}}\gind{P}{n,k} .
\end{align*}
Thus 
\begin{align*}
	h_\vep(n,k) = \left (\frac{1}{n!} \prod_{j=1}^n K_{r,s}(v_j) \right ) \frac{(-1)^{(k+n)/2} (n+k - 1)!!}{\sqrt{2 \pi}  [(s - r)^{2H} + \vep]^{(k+n+1)/2}}\gind{P}{n,k} .
\end{align*}
If $k$ is odd then $k+n$ is even if and only if $n$ is odd. If $k$ is even then $k+n$ is even if and only if $n$ is even. So we have two cases, depending on the parity of $k$. Let $\hk, k' \in \naturals$. Hence,
\begin{align*}
	f_\vep^{(2\hk-1)}(B^H_s - B^H_r) &= \sum_{m=1}^\infty I_{2m-1}(h_\vep(2m-1,2\hk -1)),
\end{align*}
and
\begin{align*}
	f_\vep^{(2k')}(B^H_s - B^H_r) &= \sum_{l=0}^\infty I_{2l}(h_\vep(2l,2k')).
\end{align*}
Lastly, take 
\begin{align*}
	g_{t,\vep}(n,k) &\equiv g_{t,\vep}(n,k;v_1, \dots, v_n) \\&=  (-1)^k \int_0^t \int_0^s h_\vep(n,k) \dd r \dd s \\& =  \left (\frac{(-1)^{(3k+n)/2} (n+k - 1)!!  }{n! \sqrt{2 \pi}  }\right ) \left ( \int_0^t \int_0^s \frac{\prod_{j=1}^n K_{r,s}(v_j) }{[(s - r)^{2H} + \vep]^{(k+n+1)/2}} \dd r \dd s \right )\gind{P}{n,k}.
\end{align*}
Then the Wiener chaos expansions of $\alpha_{t,\vep}^{(2\hk-1)}$ and $\alpha_{t,\vep}^{(2k')}$ follow. \qed
\subsection*{Proof of \Cref{thm:convergenceL2odd}}
The idea is to utilize \Cref{lemma:sufficientconvergence} as well as the Wiener chaos expansion of $\alpha^{(k)}_{t,\vep}$, which is given in \Cref{lemma:wienerchaos}. Since all homogeneous chaoses are orthogonal to each other, the $L^2(\Omega)$ norm of the Wiener chaos expansion is the infinite sum of the $L^2(\Omega)$ norms of each individual chaos term. So it suffices to study the object
\begin{align*}
		\Ex \big[I_{n}(g_{t,\vep}(n,k))^2\big]
\end{align*}
for when $(n,k) \in P$. The objective then is to show that the Wiener chaos expansion is bounded uniformly with respect to $\vep$ in $L^2(\Omega)$, and then appeal to \Cref{lemma:sufficientconvergence}. Since the function $g_{t,\vep}(n,k) \equiv g_{t,\vep}(n,k; v_1, \dots , v_n)$ is symmetric in $\tilde v := (v_1, \dots ,v_n)$, then a well known identity pertaining to multiple Wiener integrals yields
\begin{align*}
		\Ex \big [I_{n}(g_{t,\vep}(n,k))^2 \big] = n! \| g_{t,\vep}(n,k) \|^2_{\boH^{\otimes n}}.
\end{align*}
Fix $(n,k) \in P$. Recall the simplex $D_t = \{(r,s) \in \reals^2 : 0 \leq r \leq  s \leq t\}$. Then
\begin{align*}
	 \| g_{t,\vep}(n,k) \|^2_{\boH^{\otimes n}} =  \frac{[(n+k - 1)!!]^2}{(n!)^2 2 \pi} \int_{D_t^2}  \frac{1}{ [(s - r)^{2H} + \vep]^{(k+n+1)/2} [(s' - r')^{2H} + \vep]^{(k+n+1)/2}} \\ 
	 \cdot \int_{[0,T]^n}   \left (\prod_{j=1}^n K_{r,s}(v_j) K_{r',s'}(v_j) \right )  \dd \tilde v \dd r \dd s \dd r' \dd s'.
\end{align*}
Since $(n,k) \in P$, then $n+k -1$ is odd, and hence $(n+k-1)!! = \frac{(n+k-1)!}{2^{(n+k-2)/2} \left(\frac{n+k-2}{2}\right)!}$. Recall
\begin{align*}
	\lambda &= \Var( B_s^H - B_r^H) = |s - r|^{2H},\\
	\rho &= \Var( B_{s'}^H - B_{r'}^H) = |s'-r'|^{2H}, \\
	\mu &= \Cov(B_s^H - B_r^H, B_{s'}^H - B_{r'}^H) = \frac{1}{2}\left (|s - r'|^{2H} + |r - s'|^{2H} - |s - s'|^{2H} - |r - r'|^{2H}\right ).
\end{align*}
Furthermore, from \Cref{appen:MalliavinfBm}, $\int_0^T K_{r,s}(u) K_{r', s'}(u) \dd u = \langle K_{r,s}, K_{r',s'} \rangle_{\boH} = \mu$, and thus 
\begin{align*}
	\int_{[0,T]^n}   \left (\prod_{j=1}^n K_{r,s}(v_j) K_{r',s'}(v_j) \right )  \dd \tilde v = \mu^n.
\end{align*}
Now take  $\vep =0$ to maximize the integrand over $D_t^2$. This yields,
\begin{align*}
	n!\| g_{t,0}(n,k) \|^2_{\boH^{\otimes n}} =  \frac{[(n+k - 1)!]^2}{n! (2 \pi)2^{n+k-2}\left [ \left ( \frac{n+k-2}{2} \right )!\right ]^2} \int_{D_t^2}  \frac{\mu^n}{ \lambda^{(k+n+1)/2} \rho^{(k+n+1)/2}} \dd r \dd s \dd r' \dd s'.
\end{align*}
Let $\gamma := \mu/\sqrt{\lambda \rho}$. Then we get 
\begin{align}
	\sum_{n=0}^{\infty} \Ex \big [I_{n}(g_{t,0}(n,k))^2 \big] = \frac{1}{2^{k-1} \pi} \int_{D_t^2} \left \{ \sum_{n=0}^{\infty} \frac{[(n+k - 1)!]^2 \gamma^n}{n!2^n\left [ \left ( \frac{n+k-2}{2} \right )!\right ]^2} \right \} \frac{1}{ \lambda^{(k+1)/2} \rho^{(k+1)/2}} \dd r \dd s \dd r' \dd s'. \label{chaosgengamma}
\end{align}
Now take $k = 2 \hk-1$, for $\hk \in \naturals$. Then the $L^2(\Omega)$ norm of $\alpha_{t,0}^{(2 \hk-1)}$ is 
\begin{align*}
	&\sum_{m=1}^{\infty} \Ex \big [I_{2m-1}(g_{t,0}(2m-1,2\hk-1))^2 \big] \\&=  \frac{1}{2^{2\hk-2} \pi} \int_{D_t^2} \left \{ \sum_{m=1}^{\infty} \frac{[(2m +2\hk - 3)!]^2 \gamma^{2m-1}}{(2m-1)!2^{2m-1}\left [ \left ( m+\hk-2 \right )!\right ]^2} \right \} \frac{1}{ \lambda^{\hk} \rho^{\hk}} \dd r \dd s \dd r' \dd s'.
\end{align*}
After a tedious rearrangement, we can write the preceding infinite sum in terms of the Hypergeometric function, specifically
\begin{align*}
	 \sum_{m=1}^{\infty} \frac{[(2m +2\hk - 3)!]^2 \gamma^{2m-1}}{(2m-1)!2^{2m-1}\left [ \left ( m+\hk-2 \right )!\right ]^2}
	 = 	 \left ( \frac{(2 \hk - 1)!}{\sqrt{2}[(\hk-1)!] }\right )^2  \gamma F_{1,2}\left (\hk+\frac{1}{2}, \hk + \frac{1}{2}; 3/2; \gamma^2\right),
\end{align*}
where $F_{1,2}$ is the Hypergeometric function 
\begin{align*}
	F_{1,2}(a,b;c;z) := \sum_{d=0}^\infty \frac{a^{\bar d } b^{\bar d}}{c^{\bar d}} \frac{z^d}{d!}
\end{align*}
and $a^{\bar d} := \prod_{j=1}^d (a+j-1)$ is the rising factorial. Then, using the Euler identity, $F_{1,2}(a,b; c; z) = (1-z)^{c-a-b}  F_{1,2}(c-a,c-b; c; z)$, as well as $(-a)^{\bar d} = 0$ for $d > a>0$ yields
\begin{align*}
F_{1,2}\left (\hk+\frac{1}{2}, \hk + \frac{1}{2}; 3/2; \gamma^2\right)
	 =  \frac{1}{(1-\gamma^2)^{(4\hk-1)/2}} \sum_{d=0}^{\hk -1} C_{d}(\hk) \gamma^{2d}
\end{align*}
where
\begin{align*}
	C_d(\hk) := \frac{ [(1-\hk)^{\bar d}]^2 }{(3/2)^{\bar d} d!}.
\end{align*}
Thus, in order to show existence of $\alpha_t^{(2 \hk - 1)}$ in $L^2(\Omega)$, it suffices to determine for which values of $\hk \in \naturals$ and $H \in (0,1)$ the integral
\begin{align}
	\int_{D_t^2}  \frac{1}{(1-\gamma^2)^{(4\hk-1)/2} \lambda^{\hk} \rho^{\hk}} \sum_{d=0}^{\hk -1}  \gamma^{2d+1}\dd r \dd s \dd r' \dd s' \label{keyintegraloddsum}
\end{align}
is finite. Notice that we can take out the sum from the integral, so that the finiteness condition of \cref{keyintegraloddsum} can be replaced with finiteness of
\begin{align*}
	\int_{D_t^2}  \frac{ \gamma^{2d+1}}{(1-\gamma^2)^{(4\hk-1)/2} \lambda^{\hk} \rho^{\hk}}\dd r \dd s \dd r' \dd s'
\end{align*}
for each $d = 0, 1, \dots, \hk -1$. Now since $|\gamma| \leq 1$, we have that $|\gamma|^{d+1} \leq |\gamma|^d$, so it suffices to determine the convergence of the preceding integral for $d = 0$, namely, the integral 
\begin{align}
	\int_{D_t^2}  \frac{\gamma}{(1-\gamma^2)^{(4\hk-1)/2} \lambda^{\hk} \rho^{\hk}}\dd r \dd s \dd r' \dd s'. \label{keyintegralodd}
\end{align}
From \Cref{lemma:finitenessL2odd}, \cref{keyintegralodd} is finite for $H \in (0, \frac{2}{4\hk - 1})$ for all $\hk \in \naturals$. The Wiener chaos expansion of $\alpha_t^{(2 \hk - 1)}$ then follows from \Cref{lemma:wienerchaos} by letting $\vep \downarrow 0$. \qed
\subsection*{Proof of \Cref{thm:convergenceL2evenrenorm}}
The first part of the proof is identical to that of the proof of \Cref{thm:convergenceL2odd}. However, for the second part, instead take $k = 2k'$, for $k' \in \naturals$. Notice also that $\Ex[\alpha_{t,0}^{(2k')}]= I_0(g_{t,0}(0,2k'))$. Leading on from \cref{chaosgengamma}, the $L^2(\Omega)$ norm of $\alpha_{t,0}^{(2k')} - \Ex[\alpha_{t,0}^{(2k')}]$ is 
\begin{align*}
	\sum_{l=1}^{\infty} \Ex \big [I_{2l}(g_{t,0}(2l,2k'))^2\big ] =  \frac{1}{2^{2k'-1} \pi} \int_{D_t^2} \left \{ \sum_{l=1}^{\infty} \frac{[(2l +2k' - 1)!]^2 \gamma^{2l}}{(2l)!2^{2l}\left [ \left ( l+k'-1 \right )!\right ]^2} \right \} \frac{1}{ \lambda^{k'+\frac{1}{2}} \rho^{k'+\frac{1}{2}}} \dd r \dd s \dd r' \dd s'.
\end{align*}
It can be seen that
\begin{align*}
	 \sum_{l=1}^{\infty} \frac{[(2l +2 k' - 1)!]^2 \gamma^{2l}}{(2l)!2^{2l}\left [ \left ( l+k'-1 \right )!\right ]^2}
	 &= 	 \left ( \frac{(2 k' - 1)!}{(k'-1)! }\right )^2 \left (-1 + F_{1,2}\left (k'+\frac{1}{2}, k' + \frac{1}{2}; 1/2; \gamma^2\right) \right ) \\
	 &= 	 \left ( \frac{(2 k' - 1)!}{(k'-1)! }\right )^2 \frac{1}{(1-\gamma^2)^{(4k' +1)/2}} \sum_{d=1}^{k'} D_d(k') \gamma^{2d},
\end{align*}
where
\begin{align*}
	D_d(k') := \frac{ [(k')^{\bar d}]^2 }{(1/2)^{\bar d} d!}.
\end{align*}
Thus, in order to show existence of $\alpha_t^{(2 k')} - \Ex [ \alpha_t^{(2 k')}] $ in $L^2(\Omega)$, it suffices to determine for which values of $k' \in \naturals$ and $H \in (0,1)$ the integral
\begin{align}
	\int_{D_t^2} \frac{1}{(1-\gamma^2)^{(4k' + 1)/2} \lambda^{k'+\frac{1}{2}} \rho^{k'+\frac{1}{2}}}  \sum_{d=1}^{k'} \gamma^{2d}  \dd r \dd s \dd r' \dd s'
\end{align}
is finite. Similar to arguments made in the proof of \Cref{thm:convergenceL2odd}, we can take out the sum from the integral and notice that it suffices to study the case of $d = 1$, namely, the integral 
\begin{align}
	\int_{D_t^2} \frac{\gamma^2}{(1-\gamma^2)^{(4k' + 1)/2} \lambda^{k'+\frac{1}{2}} \rho^{k'+\frac{1}{2}}}\dd r \dd s \dd r' \dd s'. \label{keyintegraleven}
\end{align}
From \Cref{lemma:finitenessL2even}, \cref{keyintegraleven} is finite for $H \in (0, \frac{2}{4k' + 1})$ for all $k' \in \naturals$. The Wiener chaos expansion of $\alpha_t^{(2 k')} - \Ex [ \alpha_t^{(2 k')}] $ then follows from \Cref{lemma:wienerchaos} by subtracting the $0$-th order term and taking $\vep \downarrow 0$.  
\qed
\begin{remark}
We have omitted the proof of \Cref{thm:convergenceL2even}, since it is essentially identical to the proof of \Cref{thm:convergenceL2evenrenorm}.
\end{remark}
\subsection*{Proof of \Cref{thm:convergenceD2}}
We will first consider the odd case, which is \cref{thm:convergenceD2odd}. Let $k = 2\hk - 1$ for $\hk \in \naturals$. Appealing to the Wiener chaos expansion from \Cref{thm:convergenceL2odd} as well as \Cref{prop:D12space}, the goal is to study the finiteness of
\begin{align*}
	 \sum_{m=1}^{\infty} 2m \Ex \big [I_{2m-1}(g_{t,0}(2m-1,2\hk-1))^2\big].
\end{align*}
The following calculations are very similar to those from the proof of \Cref{thm:convergenceL2odd}, and so we will omit some details. Now we have
\begin{align*}
	 &\sum_{m=1}^{\infty} 2m \Ex \big [I_{2m-1}(g_{t,0}(2m-1,2\hk-1))^2 \big] \\
	 &=\frac{1}{2^{2\hk-2} \pi} \int_{D_t^2} \left \{ \sum_{m=1}^{\infty} \frac{[(2m +2\hk - 3)!]^2 2m \gamma^{2m-1}}{(2m-1)!2^{2m-1}\left [ \left ( m+\hk-2 \right )!\right ]^2} \right \} \frac{1}{ \lambda^{\hk} \rho^{\hk}} \dd r \dd s \dd r' \dd s'\\
	 &=\frac{1}{2^{2\hk-2} \pi} \int_{D_t^2} \left \{ \frac{\dd}{\dd \gamma} \left (\gamma \sum_{m=1}^{\infty} \frac{[(2m +2\hk - 3)!]^2 \gamma^{2m-1}}{(2m-1)!2^{2m-1}\left [ \left ( m+\hk-2 \right )!\right ]^2} \right )\right \} \frac{1}{ \lambda^{\hk} \rho^{\hk}} \dd r \dd s \dd r' \dd s'.
\end{align*}
This preceding infinite sum is
\begin{align*}
	    \sum_{m=1}^{\infty} \frac{[(2m +2\hk - 3)!]^2 \gamma^{2m-1}}{(2m-1)!2^{2m-1}\left [ \left ( m+\hk-2 \right )!\right ]^2} &= \left ( \frac{(2 \hk - 1)!}{\sqrt{2}[(\hk-1)!] }\right )^2  \gamma F_{1,2}\left (\hk+\frac{1}{2}, \hk + \frac{1}{2}; 3/2; \gamma^2\right) \\
	    &= \left ( \frac{(2 \hk - 1)!}{\sqrt{2}[(\hk-1)!] }\right )^2  \frac{\gamma}{(1-\gamma^2)^{(4\hk-1)/2}} \sum_{d=0}^{\hk -1} C_{d}(\hk) \gamma^{2d}.
\end{align*}
So it suffices to study convergence of the following integral 
\begin{align*}
	 \int_{D_t^2} \left \{  \frac{\dd}{\dd \gamma} \left ( \frac{1}{(1-\gamma^2)^{(4\hk-1)/2}}  \sum_{d=0}^{\hk -1} C_{d}(\hk) \gamma^{2d+2} \right ) \right \} \frac{1}{ \lambda^{\hk} \rho^{\hk}} \dd r \dd s \dd r' \dd s'.
\end{align*}
Doing the differentiation and ignoring constants, this becomes
\begin{align*}
	 \int_{D_t^2} \frac{1}{(1-\gamma^2)^{(4\hk-1)/2} \lambda^{\hk} \rho^{\hk}}  \sum_{d=0}^{\hk -1} \gamma^{2d+1}\dd r \dd s \dd r' \dd s' +  \int_{D_t^2} \frac{1}{(1-\gamma^2)^{(4\hk+1)/2} \lambda^{\hk} \rho^{\hk}} \sum_{d=0}^{\hk -1} \gamma^{2d+3} \dd r \dd s \dd r' \dd s'.
\end{align*}
Notice that the first integral here corresponds to the integral studied in \Cref{lemma:finitenessL2odd}, and thus converges for $H \in (0, \frac{2}{4\hk - 1})$. Furthermore, since $|\gamma| \leq 1$, we have that $|\gamma|^{d+1} \leq |\gamma|^d$. So it suffices to study the convergence of the second integral with $d = 0$, namely
\begin{align}
	 \int_{D_t^2} \frac{\gamma^3}{(1-\gamma^2)^{(4\hk+1)/2} \lambda^{\hk} \rho^{\hk}} \dd r \dd s \dd r' \dd s'. \label{keyintegraloddsmooth}
\end{align}
We can rewrite the integrand as 
\begin{align*}
	 \frac{\gamma^3}{(1-\gamma^2)^{(4\hk+1)/2} \lambda^{\hk} \rho^{\hk}} = \frac{\mu^3 \lambda^{\hk-1} \rho^{\hk-1}}{(\lambda \rho - \mu^2)^{(4\hk +1)/2}}.
\end{align*}

Since $\mu$ is bounded, we have $|\mu^3| \leq C \mu^2$, and we may therefore prove what we require by showing that

\begin{align*}
	\int_{ D_t^2} \frac{\mu^{2} \lambda^{\hk-1}\rho^{\hk-1}}{(\lambda \rho - \mu^2)^{(4\hk+1)/2}} \dd r \dd s \dd r' \dd s' <\infty,
\end{align*}

which one will notice is the key integral considered in \Cref{lemma:finitenessL2even}. It is therefore finite for $H \in (0, \frac{2}{4\hk+1})$, as is shown in the lemma. This handles the odd case. \\

Now we turn to the even case, which is \cref{thm:convergenceD2even}. Let $k = 2k'$ where $k'\in \naturals$. Utilizing the Wiener chaos expansion from \Cref{thm:convergenceL2evenrenorm} as well as \Cref{prop:D12space}, this implies that we need to show that 
\begin{align*}
	\sum_{l=1}^\infty (2l+1) \Ex[I_{2l}(g_{t,0}(2l,2k'))^2]
\end{align*}
is finite for $H \in (0, \frac{2}{4k'+3})$. Using similar calculations to the proof of \cref{thm:convergenceD2odd}, we will end up requiring finiteness of the integral 
\begin{align*}
	 \int_{D_t^2} \left \{  \frac{\dd}{\dd \gamma} \left ( \frac{1}{(1-\gamma^2)^{(4k'+1)/2}}  \sum_{d=1}^{k'} \gamma^{2d+1} \right ) \right \} \frac{1}{ \lambda^{k'+\frac{1}{2}} \rho^{k'+\frac{1}{2}}} \dd r \dd s \dd r' \dd s'.
\end{align*}

Doing the differentiation, then realising that one of the two resulting integrals corresponds to the key integral from \Cref{lemma:finitenessL2even}, we will then require showing finiteness of 

\begin{align} \label{stevie}
	 \int_{D_t^2} \frac{\gamma^4}{(1-\gamma^2)^{(4k'+3)/2} \lambda^{k'+\frac{1}{2}} \rho^{k'+\frac{1}{2}}} \dd r \dd s \dd r' \dd s'.
\end{align}

Notice that the integrand can be reexpressed as

\begin{align*}
	 \frac{\gamma^4}{(1-\gamma^2)^{(4k'+3)/2} \lambda^{k'+\frac{1}{2}} \rho^{k'+\frac{1}{2}}} = \frac{\mu^4 \lambda^{k'-1} \rho^{k'-1}}{(\lambda \rho - \mu^2)^{(4k' +3)/2}}.
\end{align*}

Since $\mu, \rho, \lambda$ are all bounded, we see $\mu^4 \lambda^{k'-1} \rho^{k'-1} \leq C |\mu|\lambda^{(k'+1)-1} \rho^{(k'+1)-1}$, and we may bound \cref{stevie} by a constant times

\begin{align*} 
	 \int_{D_t^2} \frac{|\mu|\lambda^{(k'+1)-1} \rho^{(k'+1)-1}}{(\lambda \rho - \mu^2)^{(4(k'+1) -1)/2}} \dd r \dd s \dd r' \dd s'.
\end{align*}

We now recognize this as the key integral considered in \Cref{lemma:finitenessL2odd}, with $k'+1$ in place of $\hk$. It is therefore finite, as is shown there, when $H < \frac{2}{4(k'+1)-1} = \frac{2}{4k'+3}$. This handles the even case, and completes the proof of the theorem. \qed

\begin{remark}
It would be reasonable to ask whether the bounds on $H$ in \Cref{thm:convergenceD2} can be improved. After all, we replaced $|\mu^3|$ by $\mu^2$ in the numerator of the integrand for the proof of \cref{thm:convergenceD2odd} (odd case), and similarly replaced $\mu^4 \lambda^{k'-1} \rho^{k'-1}$ by $|\mu|\lambda^{(k'+1)-1} \rho^{(k'+1)-1}$ in the proof of \cref{thm:convergenceD2even} (even case). However, it is not hard to see that these changes did not come at the cost of convergence in our arguments, at least not without additional ideas introduced. To see this, suppose in the proof of \cref{thm:convergenceD2odd} that we did not replace $|\mu^3|$ by $\mu^2$. Then to prove finiteness of the corresponding integral, consider implementing the strategy from \textbf{Case 1} in the proof of \Cref{lemma:finitenessL2even}. Following the strategy, one of the terms would result in an integrand with $b^{6H}$ in the numerator. However $H$ will be restricted by the exponents of $a$ and $c$ in the denominator of the integrand. So the $b^{6H}$ term in the numerator does not increase the range of convergence, and thus one can safely replace $|\mu|^3$ by $\mu^2$, in which case \Cref{lemma:finitenessL2even} is directly applicable. A similar argument can be made regarding the proof of \cref{thm:convergenceD2even}, by considering the strategy of \textbf{Case 1} in the proof of \Cref{lemma:finitenessL2odd}, with a slight difference being that our replacement modifies the powers of $\lambda$ and $\rho$, which will in turn change the powers of $(a+b)$ and $(b+c)$ in the integrand. Nonetheless, the situation is the same as far as convergence is concerned, as $H$ is still restricted by the exponents of $a$ and $c$ in the denominator of the integrand.
\end{remark}

\section*{Acknowledgements}
We would like to thank Maher Boudabra and Yidong Shen for valuable conversations. We would also like to thank an anonymous reviewer for helpful comments.
{\onehalfspacing
\renewcommand{\bibname}{References}
\bibliographystyle{unsrt}
\bibliography{biblio}

\begin{thebibliography}{10}

\bibitem{yu2020higher}
Q.~Yu.
\newblock Higher order derivative of self-intersection local time for
  fractional {B}rownian motion.
\newblock {\em arXiv preprint arXiv:2008.05633}, 2020.

\bibitem{varadhan1969appendix}
S.~Varadhan.
\newblock Appendix to {\it {e}uclidean quantum field theory} by {K}.
  {S}ymanzik.
\newblock {\em Local Quantum Theory. Academic Press, Reading, MA}, 1:219--226,
  1969.

\bibitem{hu2005renormalized}
Y.~Hu and D.~Nualart.
\newblock Renormalized self-intersection local time for fractional {B}rownian
  motion.
\newblock {\em The Annals of Probability}, 33(3):948--983, 2005.

\bibitem{edwards1965statistical}
S.~Edwards.
\newblock The statistical mechanics of polymers with excluded volume.
\newblock {\em Proceedings of the Physical Society}, 85(4):613, 1965.

\bibitem{le1986proprietes}
J.-F. Le~Gall.
\newblock Propri{\'e}t{\'e}s d'intersection des marches al{\'e}atoires.
\newblock {\em Communications in Mathematical Physics}, 104(3):471--507, 1986.

\bibitem{le1988fluctuation}
J.-F. Le~Gall.
\newblock Fluctuation results for the {W}iener sausage.
\newblock {\em The Annals of Probability}, pages 991--1018, 1988.

\bibitem{rosen1987intersection}
J.~Rosen.
\newblock The intersection local time of fractional {B}rownian motion in the
  plane.
\newblock {\em Journal of multivariate analysis}, 23(1):37--46, 1987.

\bibitem{hu2001self}
Y.~Hu.
\newblock Self-intersection local time of fractional {B}rownian motions-via
  chaos expansion.
\newblock {\em Journal of Mathematics of Kyoto University}, 41(2):233--250,
  2001.

\bibitem{hu2008integral}
Y.~Hu, D.~Nualart, and J.~Song.
\newblock Integral representation of renormalized self-intersection local
  times.
\newblock {\em Journal of Functional Analysis}, 255(9):2507--2532, 2008.

\bibitem{grothaus2011self}
M.~Grothaus, M.~Oliveira, J.~da~Silva, and L.~Streit.
\newblock Self-avoiding fractional {B}rownian motion-the {E}dwards model.
\newblock {\em Journal of Statistical Physics}, 145(6):1513--1523, 2011.

\bibitem{nualart2007intersection}
D.~Nualart and S.~Ortiz-Latorre.
\newblock Intersection local time for two independent fractional {B}rownian
  motions.
\newblock {\em Journal of Theoretical Probability}, 20(4):759--767, 2007.

\bibitem{wu2010regularity}
D.~Wu and Y.~Xiao.
\newblock Regularity of intersection local times of fractional {B}rownian
  motions.
\newblock {\em Journal of Theoretical Probability}, 23(4):972--1001, 2010.

\bibitem{oliveira2011intersection}
M.~Oliveira, J.~Da~Silva, and L.~Streit.
\newblock Intersection local times of independent fractional {B}rownian motions
  as generalized white noise functionals.
\newblock {\em Acta Applicandae Mathematicae}, 113(1):17--39, 2011.

\bibitem{jiang2007collision}
Y.~Jiang and Y.~Wang.
\newblock On the collision local time of fractional {B}rownian motions.
\newblock {\em Chinese Annals of Mathematics, Series B}, 28(3):311--320, 2007.

\bibitem{chen2011large}
X.~Chen, W.~Li, J.~Rosi{\'n}ski, and Q.~Shao.
\newblock Large deviations for local times and intersection local times of
  fractional {B}rownian motions and {R}iemann--{L}iouville processes.
\newblock {\em The Annals of Probability}, 39(2):729--778, 2011.

\bibitem{chen2011remarks}
C.~Chen and L.~Yan.
\newblock Remarks on the intersection local time of fractional {B}rownian
  motions.
\newblock {\em Statistics \& Probability Letters}, 81(8):1003--1012, 2011.

\bibitem{chen2018renormalized}
Z.~Chen, L.~Sang, and X.~Hao.
\newblock Renormalized self-intersection local time of bifractional {B}rownian
  motion.
\newblock {\em Journal of Inequalities and Applications}, 2018(1):326, 2018.

\bibitem{jaramillo2019functional}
A.~Jaramillo and D.~Nualart.
\newblock Functional limit theorem for the self-intersection local time of the
  fractional {B}rownian motion.
\newblock {\em Annales de l'Institut Henri Poincar{\'e}, Probabilit{\'e}s et
  Statistiques}, 55(1):480--527, 2019.

\bibitem{rezgui2007renormalization}
A.~Rezgui.
\newblock The renormalization of self intersection local times of fractional
  {B}rownian motion.
\newblock In {\em Int. Math. Forum}, volume~2, pages 2161--2178, 2007.

\bibitem{yu2019smoothness}
X.~Yu.
\newblock Smoothness of self-intersection local time of multidimensional
  fractional {B}rownian motion.
\newblock {\em Communications in Statistics-Theory and Methods},
  48(17):4278--4293, 2019.

\bibitem{evans2017spans}
S.~Evans, J.~Pitman, and W.~Tang.
\newblock The spans in {B}rownian motion.
\newblock {\em Annales de l'Institut Henri Poincar{\'e}, Probabilit{\'e}s et
  Statistiques}, 53(3):1108--1135, 2017.

\bibitem{berman1973local}
S.~Berman.
\newblock Local nondeterminism and local times of {G}aussian processes.
\newblock {\em Indiana University Mathematics Journal}, 23(1):69--94, 1973.

\bibitem{nualart1992chaos}
D.~Nualart and J.~Vives.
\newblock Chaos expansions and local times.
\newblock {\em Publicacions Matematiques}, pages 827--836, 1992.

\bibitem{imkeller1995chaos}
P.~Imkeller, V.~Perez-Abreu, and J.~Vives.
\newblock Chaos expansions of double intersection local time of {B}rownian
  motion in $\reals^d$ and renormalization.
\newblock {\em Stochastic Processes and Their Applications}, 56(1):1--34, 1995.

\bibitem{albeverio1997remark}
S.~Albeverio, Y.~Hu, and X.~Zhou.
\newblock A remark on non-smoothness of the self-intersection local time of
  planar {B}rownian motion.
\newblock {\em Statistics \& Probability Letters}, 32(1):57--65, 1997.

\bibitem{hu2002chaos}
Y.~Hu and B.~{\O}ksendal.
\newblock Chaos expansion of local time of fractional {B}rownian motions.
\newblock {\em Stochastic Analysis and Applications}, 2002.

\bibitem{jung2014tanaka}
P.~Jung and G.~Markowsky.
\newblock On the {T}anaka formula for the derivative of self-intersection local
  time of fractional {B}rownian motion.
\newblock {\em Stochastic Processes and their Applications},
  124(11):3846--3868, 2014.

\bibitem{markowsky2012derivative}
G.~Markowsky.
\newblock The derivative of the intersection local time of {B}rownian motion
  through {W}iener chaos.
\newblock In {\em S{\'e}minaire de Probabilit{\'e}s XLIV}, pages 141--148.
  Springer, 2012.

\bibitem{de2000renormalization}
M.~de~Faria, C.~Drumond, and L.~Streit.
\newblock The renormalization of self-intersection local times {I}. {T}he chaos
  expansion.
\newblock {\em Infinite Dimensional Analysis, Quantum Probability, and Related
  Topics}, 3(2), 2000.

\bibitem{mataramvura2004donsker}
S.~Mataramvura, B.~{\O}ksendal, and F.~Proske.
\newblock The {D}onsker delta function of a {L}{\'e}vy process with application
  to chaos expansion of local time.
\newblock In {\em Annales de l'IHP Probabilit{\'e}s et statistiques},
  volume~40, pages 553--567, 2004.

\bibitem{bornales2016chaos}
J.~Bornales, M.~Oliveira, and L.~Streit.
\newblock Chaos decomposition and gap renormalization of {B}rownian
  self-intersection local times.
\newblock {\em Reports on Mathematical Physics}, 77(2):141--152, 2016.

\bibitem{rosen2005derivatives}
J.~Rosen.
\newblock Derivatives of self-intersection local times.
\newblock In {\em S{\'e}minaire de Probabilit{\'e}s XXXVIII}, pages 263--281.
  Springer, 2005.

\bibitem{hong2020derivatives}
M.~Hong and F.~Xu.
\newblock Derivatives of local times for some {G}aussian fields.
\newblock {\em Journal of Mathematical Analysis and Applications},
  484(2):123716, 2020.

\bibitem{guo2018higher}
J.~Guo and Y.~Xiao.
\newblock Higher-order derivative local time for fractional
  {O}rnstein-{U}hlenbeck processes.
\newblock {\em arXiv preprint arXiv:1810.12772}, 2018.

\bibitem{guo2019higher}
J.~Guo, Y.~Hu, and Y.~Xiao.
\newblock Higher-order derivative of intersection local time for two
  independent fractional {B}rownian motions.
\newblock {\em Journal of Theoretical Probability}, 32(3):1190--1201, 2019.

\bibitem{yan2014derivative}
L.~Yan.
\newblock Derivative for the intersection local time of fractional {B}rownian
  motions.
\newblock {\em arXiv preprint arXiv:1403.4102}, 2014.

\bibitem{yan2015derivative}
L.~Yan and X.~Yu.
\newblock Derivative for self-intersection local time of multidimensional
  fractional {B}rownian motion.
\newblock {\em Stochastics An International Journal of Probability and
  Stochastic Processes}, 87(6):966--999, 2015.

\bibitem{yan2017derivative}
L.~Yan, X.~Yu, and R.~Chen.
\newblock Derivative of intersection local time of independent symmetric stable
  motions.
\newblock {\em Statistics \& Probability Letters}, 121:18--28, 2017.

\bibitem{shi2017fractional}
Q.~Shi and X.~Yu.
\newblock Fractional smoothness of derivative of self-intersection local times.
\newblock {\em Statistics \& Probability Letters}, 129:241--251, 2017.

\bibitem{shi2020fractional}
Q.~Shi.
\newblock Fractional smoothness of derivative of self-intersection local times
  with respect to bi-fractional {B}rownian motion.
\newblock {\em Systems \& Control Letters}, 138:104627, 2020.

\bibitem{jaramillo2017asymptotic}
A.~Jaramillo and D.~Nualart.
\newblock Asymptotic properties of the derivative of self-intersection local
  time of fractional {B}rownian motion.
\newblock {\em Stochastic Processes and their Applications}, 127(2):669--700,
  2017.

\bibitem{jaramillo2019approximation}
A.~Jaramillo, I.~Nourdin, and G.~Peccati.
\newblock Approximation of fractional local times: {Z}ero energy and
  derivatives.
\newblock {\em arXiv preprint arXiv:1903.08683}, 2019.

\bibitem{jung2015holder}
Paul Jung and Greg Markowsky.
\newblock H{\"o}lder continuity and occupation-time formulas for f{B}m
  self-intersection local time and its derivative.
\newblock {\em Journal of Theoretical Probability}, 28(1):299--312, 2015.

\bibitem{markowsky2008proof}
G.~Markowsky.
\newblock Proof of a {T}anaka-like formula stated by {J}. {R}osen in
  {S}{\'e}minaire {XXXVIII}.
\newblock In {\em S{\'e}minaire de Probabilit{\'e}s XLI}, pages 199--202.
  Springer, 2008.

\bibitem{markowsky2008renormalization}
G.~Markowsky.
\newblock Renormalization and convergence in law for the derivative of
  intersection local time in $\reals^2$.
\newblock {\em Stochastic Processes and their Applications}, 118(9):1552--1585,
  2008.

\bibitem{yu2020asymptotic}
Q.~Yu.
\newblock Asymptotic properties for q-th chaotic component of derivative of
  self-intersection local time of fractional {B}rownian motion.
\newblock {\em Journal of Mathematical Analysis and Applications},
  492(2):124477, 2020.

\bibitem{rosen1986tanaka}
J.~Rosen.
\newblock Tanaka's formula and renormalization for intersections of planar
  {B}rownian motion.
\newblock {\em The Annals of Probability}, 14(4):1245--1251, 1986.

\bibitem{hu2007regularity}
Y.~Hu and D.~Nualart.
\newblock Regularity of renormalized self-intersection local time for
  fractional {B}rownian motion.
\newblock {\em Communications in Information and Systems}, 7(1):21--30, 2007.

\bibitem{nualart2006malliavin}
David Nualart.
\newblock {\em The {M}alliavin calculus and related topics}, volume 1995.
\newblock Springer, Berlin, 2006.

\end{thebibliography}
}
\appendix
\section{Malliavin calculus background}
\label{appen:malliavin}
In this appendix, we provide a short discussion on the Malliavin calculus machinery utilized in this paper. The content here is mainly inspired by \citep{nualart2006malliavin}, and thus we will omit any proofs. Let $W = \{W(\varphi): \varphi \in \HH\}$ be a collection of random variables, where $\HH$ is a real, separable Hilbert space. $W$ is called an isonormal Gaussian process if for any $\varphi \in \HH$, the random variable $W(\varphi)$ is normal with zero mean and for any $\varphi^1, \varphi^2 \in \HH$, we have
\begin{align*}
	\Ex[W(\varphi^1) W( \varphi^2)] = \langle \varphi^1, \varphi^2 \rangle_\HH.
\end{align*}
\subsection{Malliavin differentiation and Wiener chaos}
\begin{definition}[Malliavin derivative]
Let $W$ be an isonormal Gaussian process with respect to an underlying Hilbert space $\HH$. Denote by $C^{\infty}_b(\reals^n)$ the space of bounded smooth functions on $\reals^n$. Consider the space of random variables 
\begin{align*}
	\mathcal{S} := \{F = f(W(h_1), \dots, W(h_n)), f \in C^{\infty}_b(\reals^n), h_i \in \HH \text{ for each } i =1, \dots, n\}.
\end{align*}
The Malliavin derivative of $F \in \mathcal{S}$, denoted by $DF$, is given by\footnote{Here, $\partial_i f(x_1,\dots, x_i, \dots, x_n) \equiv \frac{\partial}{\partial x_i}f(x_1,\dots, x_i, \dots, x_n)$.}
\begin{align*}
	DF = \sum_{i=1}^n \partial_i f(W(h_1), \dots, W(h_n)) h_i.
\end{align*}
The $k$-fold iterated Malliavin derivative of $F \in \mathcal{S}$, denoted by $D^k F$, is given by
\begin{align*}
	D^kF = \sum_{i_1, i_2, \dots, i_k = 1}^n \partial^{k}_{i_1, i_2, \dots, i_k} f(W(h_1), \dots, W(h_n)) \bigotimes_{j=1}^k h_{i_j}.
\end{align*}
In particular, for $n = 1$, then $DF = f'(W(h)) h$ and $D^kF = f^{(k)}(W(h)) h^{\otimes k}$.
\end{definition}
\begin{definition}
\label{defn:Dkpspace}
Denote by $\mathbb{D}^{k,p}$ the completion of $\mathcal{S}$ with respect to the seminorm 
\begin{align*}
	\| F \|_{k,p} := \left [ \Ex|F|^p + \sum_{j=1}^k \Ex[\|D^j F \|^p_{\HH^{\otimes j}}] \right ]^{1/p}.
\end{align*}
The domain of the Malliavin derivative $D^k$ in $L^p(\Omega)$ is $\mathbb{D}^{k,p}$. 
\end{definition}
\begin{theorem}[Wiener chaos expansion]
\label{thm:wienerchaosexp}
Let $\GG$ be the $\sigma$-field generated by an isonormal Gaussian process $W$ with the underlying Hilbert space $\HH = L^2([a,b])$. Let $\xi \in L^2(\Omega)$. Then there exists functions $f_n : [a,b]^n \to \reals$, such that $\xi$ admits the representation
\begin{align*}
	\xi = \sum_{n=0}^\infty I_n(f_n), 
\end{align*}
which is called the Wiener chaos expansion of $\xi$.
\end{theorem}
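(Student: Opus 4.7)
The plan is to follow the classical proof based on Hermite polynomials and the totality of exponential random variables in $L^2(\Omega)$. First I would introduce, for each $n \geq 0$, the $n$-th Wiener chaos $\mathcal{H}_n$, defined as the closed linear subspace of $L^2(\Omega)$ generated by $\{H_n(W(h)) : h \in \HH, \|h\|_\HH = 1\}$, where $H_n$ is the $n$-th Hermite polynomial normalized so that $\exp(tx - t^2/2) = \sum_{n \geq 0} t^n H_n(x)/n!$. The key computational lemma to establish at this stage is that for $h, g \in \HH$ with unit norm,
\begin{align*}
\Ex[H_n(W(h)) H_m(W(g))] = \delta_{n,m} \, n! \, \langle h, g \rangle_\HH^n,
\end{align*}
which follows by differentiating the generating function identity $\Ex[\mathcal{E}(th)\mathcal{E}(sg)] = \exp(ts\langle h,g\rangle)$ in $t$ and $s$ and equating coefficients. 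This immediately yields $\mathcal{H}_n \perp \mathcal{H}_m$ for $n \neq m$.

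Next I would prove the totality statement $L^2(\Omega) = \bigoplus_{n=0}^\infty \mathcal{H}_n$. The crucial input is that the family of exponential random variables $\mathcal{E}(h) := \exp(W(h) - \tfrac{1}{2}\|h\|_\HH^2)$, $h \in \HH$, is total in $L^2(\Omega)$. Indeed, any $\xi \in L^2(\Omega)$ orthogonal to all $\mathcal{E}(h)$ satisfies $\Ex[\xi \exp(\sum_{j=1}^k t_j W(h_j))] = 0$ for every finite tuple $h_1, \dots, h_k$ and real $t_j$; by analytic continuation in $(t_j)$ and uniqueness of Fourier transforms on the finite-dimensional Gaussian space generated by $(W(h_j))$, this forces $\Ex[\xi \mid \sigma(W(h_1), \dots, W(h_k))] = 0$, and since $\GG$ is generated by such cylindrical $\sigma$-fields a monotone class / martingale convergence argument gives $\xi = 0$. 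Combining this with the generating function expansion
\begin{align*}
\mathcal{E}(h) = \sum_{n=0}^\infty \frac{\|h\|_\HH^n}{n!} H_n\!\left(\frac{W(h)}{\|h\|_\HH}\right)
\end{align*}
(convergent in $L^2(\Omega)$, by computing the norm via the orthogonality identity above) shows that each $\mathcal{E}(h)$, and hence every element of $L^2(\Omega)$, lies in $\overline{\bigoplus_n \mathcal{H}_n}$. Orthogonality then gives the direct sum decomposition $\xi = \sum_{n \geq 0} \xi_n$ with $\xi_n \in \mathcal{H}_n$.

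Finally, I would identify $\mathcal{H}_n$ with the image of the multiple Wiener integral $I_n$ acting on symmetric square-integrable functions on $[a,b]^n$. Starting from simple step functions supported on disjoint rectangles, one defines $I_n$ as an iterated It\^o-type integral and verifies by direct computation that for an orthonormal sequence $(e_i) \subset \HH$ and multi-indices with $\sum_i n_i = n$, the integral of the symmetrization of $\bigotimes_i e_i^{\otimes n_i}$ equals, up to a combinatorial constant, $\prod_i n_i! \, H_{n_i}(W(e_i))$. Since products of Hermite polynomials in an orthonormal system span $\mathcal{H}_n$, this shows $I_n$ extends to a surjective map $L^2_{\mathrm{sym}}([a,b]^n) \to \mathcal{H}_n$ satisfying the isometry relation $\Ex[I_n(f)^2] = n! \|f\|^2_{L^2([a,b]^n)}$, and consequently every $\xi_n$ is of the form $I_n(f_n)$. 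Putting the pieces together gives the claimed expansion.

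The main obstacle, in my view, is the totality of the exponential family in $L^2(\Omega)$: establishing this rigorously requires the analytic/Fourier argument sketched above together with a monotone class step to promote the cylindrical result to the full $\sigma$-field $\GG$. Everything else — orthogonality of chaoses and the isometry for $I_n$ — reduces to algebraic manipulations with Hermite polynomials and Gaussian moments.
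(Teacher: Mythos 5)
The paper does not prove this statement at all: it appears in \Cref{appen:malliavin} as quoted background, with the authors explicitly stating that the material is taken from Nualart's book and that all proofs are omitted. Your proposal is therefore not competing with an argument in the paper; it reconstructs the standard proof from that reference, and it does so correctly. The three ingredients you isolate are exactly the right ones: orthogonality of the Hermite chaoses $\mathcal{H}_n$ via the generating-function computation $\Ex[\mathcal{E}(th)\mathcal{E}(sg)]=\exp(ts\langle h,g\rangle_{\HH})$; totality of the exponentials $\mathcal{E}(h)$ in $L^2(\Omega,\GG,\Pro)$, proved by analytic continuation in the parameters, injectivity of the Fourier transform on the cylindrical $\sigma$-fields $\sigma(W(h_1),\dots,W(h_k))$, and a martingale-convergence step to pass to all of $\GG$ (you are right that this is the only genuinely delicate point, and it is essential that $\GG$ is generated by $W$, which the theorem's hypotheses guarantee); and the identification of $\mathcal{H}_n$ with the range of $I_n$ on symmetric kernels via the identity expressing $I_n$ of symmetrized tensor products of an orthonormal system as products of Hermite polynomials, together with the isometry $\Ex[I_n(f)^2]=n!\|\tilde f\|^2_{L^2([a,b]^n)}$. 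The only caveat is bookkeeping: the combinatorial constants relating $I_n(h^{\otimes n})$ to $H_n(W(h))$ depend on the chosen normalization of the Hermite polynomials, so the phrase ``up to a combinatorial constant'' should eventually be pinned down, and one should note that the $f_n$ in the statement may (and should) be taken symmetric and square-integrable. None of this affects correctness.
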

\begin{proposition}
\label{prop:D12space}
Suppose $\xi \in L^2(\Omega)$ and possesses Wiener chaos expansion as in \Cref{thm:wienerchaosexp}. Then $\xi \in \mathbb{D}^{1,2}$ if and only if
\begin{align*}
	 \sum_{n=0}^\infty (n+1) \Ex[I_n(f_n)^2] < \infty.
\end{align*}
\end{proposition}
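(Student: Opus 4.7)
The plan is to reduce everything to the action of the Malliavin derivative on a single multiple Wiener integral, namely the identity $D I_n(f_n) = n\, I_{n-1}(f_n(\cdot, t))$ as an element of $L^2(\Omega;\HH)$ (for symmetric $f_n$), and then to exploit orthogonality of distinct chaoses both in $L^2(\Omega)$ and in $L^2(\Omega;\HH)$.

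First I would establish this derivative identity. On Hermite-type chaos elements of the form $I_n(h^{\otimes n}) = \|h\|_{\HH}^n H_n(W(h)/\|h\|_{\HH})$ with $h \in \HH$, the formula follows immediately from the definition of $D$ applied to a smooth cylinder functional, together with the Hermite recursion $H_n'(x) = n H_{n-1}(x)$. Because symmetric tensor products of the form $h^{\otimes n}$ span a dense subspace of the symmetric $n$-tensors in $\HH^{\otimes n}$, a closability/continuity argument then extends the formula to arbitrary symmetric $f_n$. In particular this yields the key isometry
\begin{equation*}
\Ex\bigl[\|D I_n(f_n)\|_{\HH}^2\bigr] \;=\; n^2 (n-1)!\, \|f_n\|_{\HH^{\otimes n}}^2 \;=\; n\, \Ex[I_n(f_n)^2].
\end{equation*}

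Next I would assemble the norm identity. Since distinct Wiener chaoses are orthogonal in $L^2(\Omega)$ and, by the derivative formula above, $D I_n(f_n)$ lies in the $(n-1)$-th chaos tensored with $\HH$, these derivatives are mutually orthogonal in $L^2(\Omega;\HH)$. Hence, for the truncation $F_N := \sum_{n=0}^N I_n(f_n)$, which lies in $\mathbb{D}^{1,2}$ since each $I_n(f_n)$ does,
\begin{equation*}
\|F_N\|_{1,2}^2 \;=\; \sum_{n=0}^N \Ex[I_n(f_n)^2] \;+\; \sum_{n=1}^N n\, \Ex[I_n(f_n)^2] \;=\; \sum_{n=0}^N (n+1)\, \Ex[I_n(f_n)^2].
\end{equation*}

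Finally I would close the argument via completeness of $\mathbb{D}^{1,2}$. By \Cref{thm:wienerchaosexp}, $F_N \to F$ in $L^2(\Omega)$. If $\sum_{n\ge 0} (n+1)\Ex[I_n(f_n)^2] < \infty$, then $\{F_N\}$ is Cauchy in the Hilbert space $\mathbb{D}^{1,2}$ and so converges there; its limit must agree with $F$, placing $F \in \mathbb{D}^{1,2}$. Conversely, if $F \in \mathbb{D}^{1,2}$, the mutual orthogonality of chaos subspaces in the $\langle\cdot,\cdot\rangle_{1,2}$ inner product forces the chaos decomposition to converge in $\mathbb{D}^{1,2}$, whence $\|F\|_{1,2}^2 = \sum_{n\ge 0} (n+1)\Ex[I_n(f_n)^2] < \infty$. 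The one point requiring real care is the very first step: since $I_n(f_n)$ is defined by an $L^2$-limit procedure rather than as a smooth cylinder functional, the derivative formula is not immediate from the definition of $D$, and must be obtained via the closability of $D$ together with the density of Hermite (or exponential) vectors in each homogeneous chaos.
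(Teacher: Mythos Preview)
The paper does not give its own proof of this proposition: \Cref{appen:malliavin} explicitly states that the Malliavin calculus background is drawn from \citep{nualart2006malliavin} and that proofs are omitted. Your argument is correct and is essentially the standard proof found there --- the derivative identity $D I_n(f_n)=nI_{n-1}(f_n(\cdot,t))$, orthogonality of the homogeneous chaoses in both $L^2(\Omega)$ and $L^2(\Omega;\HH)$, and then completeness of $\mathbb{D}^{1,2}$ to pass from the truncations $F_N$ to $F$. Your closing remark correctly identifies the only genuinely delicate point (closability of $D$ to justify the derivative formula on general $I_n(f_n)$); the converse direction, though stated tersely, is also fine once one notes that finite chaos sums are dense in $\mathbb{D}^{1,2}$, so the $\langle\cdot,\cdot\rangle_{1,2}$-orthogonal decomposition of $F$ along the chaoses must coincide with its $L^2$ chaos expansion by uniqueness.
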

\begin{remark}
Consider the Malliavin derivative $DF$ when $n = 1$. When the underlying Hilbert space is $\HH = L^2([a,b])$, we will write 
\begin{align*}
	D_vF = f'(W(h)) h(v)
\end{align*}
and
\begin{align*}
	D^k_{v_1,v_2, \dots, v_k} F = f^{(k)}(W(h)) \prod_{j=1}^k h(v_j).
\end{align*}
\end{remark}
\begin{theorem}[Stroock's formula]
\label{thm:stroock}
Let $\xi$ possess a Wiener chaos expansion as in \Cref{thm:wienerchaosexp}. Then Stroock's formula states that the functions $f_n: [a,b]^n \to \reals$ have the explicit representation 
\begin{align*}
	f_n= \frac{1}{n!}\Ex \left [ D^{n}\xi \right ].
\end{align*}
\end{theorem}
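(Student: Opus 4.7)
The plan is to prove Stroock's formula by differentiating the Wiener chaos expansion term by term and exploiting the orthogonality of the chaoses. Write $\xi = \sum_{n=0}^\infty I_n(f_n)$ with symmetric kernels $f_n \in \boH^{\otimes n}$, where $\boH = L^2([a,b])$. The whole proof hinges on the well-known action of the Malliavin derivative on a multiple Wiener integral: for symmetric $f \in \boH^{\otimes n}$ and $k \leq n$,
\begin{align*}
    D^k_{v_1,\ldots,v_k} I_n(f) = \frac{n!}{(n-k)!}\, I_{n-k}\bigl(f(\cdot,v_1,\ldots,v_k)\bigr),
\end{align*}
and $D^k I_n(f) = 0$ whenever $k > n$. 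I would take this identity as known, since it is the standard derivative rule stated in, e.g., Nualart's monograph cited in \Cref{appen:malliavin}; a reader wishing to verify it can do so by checking it first on simple Hermite polynomials of the form $H_n(W(h)/\|h\|_\boH)\|h\|_\boH^n = I_n(h^{\otimes n})$ using the elementary recurrence $H_n' = n H_{n-1}$, and then invoking the density of such elementary functionals in $\mathbb{D}^{k,2}$.

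With that identity in hand, the plan is as follows. First, I assume $\xi \in \mathbb{D}^{n,2}$, so that $D^n\xi$ is a well-defined element of $L^2(\Omega;\boH^{\otimes n})$ and the series $\sum_m I_m(f_m)$ converges in $\mathbb{D}^{n,2}$, justifying termwise application of $D^n$. Applying the derivative rule to each summand,
\begin{align*}
    D^n \xi \;=\; \sum_{m=n}^\infty \frac{m!}{(m-n)!}\, I_{m-n}\bigl(f_m(\cdot,v_1,\ldots,v_n)\bigr),
\end{align*}
where the $m<n$ terms vanish.

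Next, I take expectation of both sides, viewed as an equality in $L^2([a,b]^n)$. By the fundamental property $\Ex[I_j(g)] = 0$ for $j \geq 1$, every term with $m-n \geq 1$ contributes zero to $\Ex[D^n\xi]$, and only the $m=n$ term survives. For that term we have $I_0(f_n) = f_n$ (a deterministic element of $\boH^{\otimes n}$) multiplied by the combinatorial factor $n!/0! = n!$, yielding $\Ex[D^n\xi] = n!\, f_n$. Rearranging gives Stroock's formula $f_n = \frac{1}{n!}\Ex[D^n\xi]$.

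The main technical obstacle is justifying the termwise differentiation and the termwise expectation: both require that the chaos expansion of $\xi$ converges not merely in $L^2(\Omega)$ but in the stronger norm $\|\cdot\|_{n,2}$ of \Cref{defn:Dkpspace}. This is automatic under the hypothesis $\xi \in \mathbb{D}^{n,2}$ (the natural regularity assumption in the setting where $D^n\xi$ makes sense), since $\mathbb{D}^{n,2}$ is precisely the closure of $\mathcal{S}$ in $\|\cdot\|_{n,2}$ and the chaos projections are continuous in that norm; the exchange of expectation and infinite sum is then standard by continuity of the inner product $g \mapsto \Ex[I_0(g)]$ on $\boH^{\otimes n}$. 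No subtleties beyond these remain, and the formula follows.
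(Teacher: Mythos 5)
The paper offers no proof of this statement to compare against: \Cref{appen:malliavin} explicitly defers all proofs to Nualart's monograph, and Stroock's formula is simply quoted there as background. What you have written is the standard proof of the formula, and it is correct: the derivative rule $D^k_{v_1,\dots,v_k} I_m(f) = \tfrac{m!}{(m-k)!}\, I_{m-k}(f(\cdot,v_1,\dots,v_k))$ for symmetric kernels, termwise differentiation justified by convergence of the chaos series in the $\|\cdot\|_{n,2}$ norm (equivalently, closedness of $D^n$ together with boundedness of the chaos projections on $\mathbb{D}^{n,2}$), and the annihilation $\Ex[I_j(g)]=0$ for $j\geq 1$ so that only the $m=n$ term survives. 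The one substantive issue you flag is real: as printed, the theorem assumes only $\xi\in L^2(\Omega)$, whereas $\Ex[D^n\xi]$ is meaningful only for $\xi\in\mathbb{D}^{n,2}$, so your added regularity hypothesis is the correct reading of the statement rather than a loss of generality; in the paper's only application, $\xi=f_\vep^{(k)}(B^H_s-B^H_r)=g(W^B(K_{r,s}))$ with $g\in C^\infty_b(\reals)$, so $\xi\in\mathbb{D}^{n,2}$ for every $n$ and nothing is lost. Your parenthetical route for verifying the derivative rule via Hermite polynomials is also sound, provided you use the probabilists' normalization in which $I_m(h^{\otimes m})=\|h\|_{\boH}^m\,\mathrm{He}_m\bigl(W(h)/\|h\|_{\boH}\bigr)$ and $\mathrm{He}_m'=m\,\mathrm{He}_{m-1}$; that is a cosmetic point only.
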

\begin{lemma}
\label{lemma:sufficientconvergence}
Let $F_\vep$ be a collection of $L^2(\Omega)$ random variables with Wiener chaos expansions $F_\vep = \sum_{n=0}^\infty I_n(f_n^\vep)$. Let $\boH := L^2([0,T])$. Suppose that for each $n$, $f_n^\vep$ converges in $\boH^{\otimes n}$ to $f_n$ as $\vep \downarrow 0$, and that
\begin{align*}
	\sum_{n=0}^\infty \sup_\vep \Ex \big [I_n(f_n^\vep)^2 \big ] = \sum_{n=0}^\infty \sup_\vep \{ n! \| \hat f_n^\vep \|^2_{\boH^{\otimes n}}\} < \infty
\end{align*}
where $\hat f_n^\vep(v_1, \dots, v_n) := \frac{1}{n!}\sum_{\sigma \in S_n} f_n^\vep(v_{\sigma_1}, \dots, v_{\sigma_n})$ is the symmetric version of $f^\vep_n$. Then $F_\vep$ converges in $L^2(\Omega)$ to $F = \sum_{n=0}^\infty I_n(f_n)$ as $\vep \downarrow 0$.
\end{lemma}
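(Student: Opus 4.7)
The plan is to exploit the orthogonality of distinct Wiener chaoses together with the It\^o isometry $\Ex[I_n(g)^2] = n!\|\hat g\|^2_{\boH^{\otimes n}}$, and then conclude by dominated convergence applied to the series. Concretely, I will first verify that the candidate limit $F = \sum_{n=0}^\infty I_n(f_n)$ actually lies in $L^2(\Omega)$, then show termwise convergence of each chaos component, and finally interchange the limit in $\vep$ with the summation over $n$.

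For the first step, since the symmetrization map $g \mapsto \hat g$ is a bounded linear operator on $\boH^{\otimes n}$, the hypothesis $f_n^\vep \to f_n$ in $\boH^{\otimes n}$ yields $\hat f_n^\vep \to \hat f_n$ in $\boH^{\otimes n}$, hence
\begin{align*}
n!\|\hat f_n\|^2_{\boH^{\otimes n}} = \lim_{\vep \downarrow 0} n!\|\hat f_n^\vep\|^2_{\boH^{\otimes n}} \leq \sup_\vep \Ex[I_n(f_n^\vep)^2].
\end{align*}
Summing over $n$ and invoking the hypothesis gives $\sum_{n=0}^\infty \Ex[I_n(f_n)^2] < \infty$, so $F \in L^2(\Omega)$ is well-defined as an orthogonal sum.

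For the second step, the It\^o isometry gives
\begin{align*}
\Ex\bigl[(I_n(f_n^\vep) - I_n(f_n))^2\bigr] = n!\,\|\hat f_n^\vep - \hat f_n\|^2_{\boH^{\otimes n}},
\end{align*}
which vanishes as $\vep \downarrow 0$ by the hypothesized $\boH^{\otimes n}$-convergence (using again continuity of symmetrization).

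For the third step, orthogonality of chaoses of different orders yields
\begin{align*}
\Ex\bigl[(F_\vep - F)^2\bigr] = \sum_{n=0}^\infty \Ex\bigl[(I_n(f_n^\vep) - I_n(f_n))^2\bigr].
\end{align*}
Each summand tends to $0$ by step two. To justify passing the limit through the infinite sum, I will use the bound
\begin{align*}
\Ex\bigl[(I_n(f_n^\vep) - I_n(f_n))^2\bigr] \leq 2\Ex[I_n(f_n^\vep)^2] + 2\Ex[I_n(f_n)^2] \leq 4\sup_\vep \Ex[I_n(f_n^\vep)^2],
\end{align*}
where the final inequality uses the bound from step one. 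Since $\sum_{n=0}^\infty \sup_\vep \Ex[I_n(f_n^\vep)^2] < \infty$ by hypothesis, dominated convergence for series applies and the sum tends to $0$, completing the proof.

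The only conceptual subtlety is ensuring uniform integrability over the chaos index $n$; this is built directly into the hypothesis, so the argument is essentially routine once the isometry and orthogonality are in place. There are no hard technical obstacles here beyond confirming that the symmetrization operator is a contraction (hence continuous) on $\boH^{\otimes n}$, which is standard.
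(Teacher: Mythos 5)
Your proof is correct. Note that the paper itself does not supply an argument for this lemma: it appears in \Cref{appen:malliavin}, where the authors state that all proofs of the background material are omitted (citing Nualart's book as the source of inspiration), so there is no paper proof to compare against. Your argument is the standard one and is complete: the symmetrization operator is a contraction on $\boH^{\otimes n}$ (by the triangle inequality and invariance of the norm under permutation of variables), so termwise convergence of the kernels passes to the symmetrizations; the It\^o isometry combined with $I_n(g)=I_n(\hat g)$ gives convergence of each chaos component; and the bound $\Ex[(I_n(f_n^\vep)-I_n(f_n))^2]\leq 4\sup_\vep \Ex[I_n(f_n^\vep)^2]$, whose right-hand side is summable in $n$ by hypothesis and independent of $\vep$, legitimately justifies interchanging the limit with the sum via dominated convergence for series. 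No gaps.
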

\subsection{Malliavin calculus relating to fBm}
\label{appen:MalliavinfBm}
Let $(B_t)$ be an ordinary Brownian motion, and let $\boH := L^2([0,T])$. Then the collection $W^B := \{ W^B(\phi) := \int_0^T \phi_t \dd B_t : \phi \in \boH \}$ is an isonormal Gaussian process with respect to the underlying Hilbert space $\boH$. Let $\EE$ denote space of the indicator functions on $[0,T]$. For $u \leq t$ and $H>1/2$, define
\begin{align*}
	K_{0,t}(u) := c_H u^{\frac{1}{2} - H} \int_u^t (r - u)^{H-\frac{3}{2}}r^{H-\frac{1}{2}}\dd r,
\end{align*}
where 
\begin{align*}
	c_H := \left [ \frac{H(2H-1)}{\beta(2-2H, H-\frac{1}{2})}\right ]^{1/2},
\end{align*}
and for $H <1/2$,
\begin{align*}
	K_{0,t}(u) := c_H \Bigg [\left (\frac{t}{u} \right )^{H-\frac{1}{2}}(t-u)^{H-\frac{1}{2}}- \left (H-\frac{1}{2} \right)u^{\frac{1}{2}-H}\int_u^t r^{H-\frac{3}{2}}(r-u)^{H-\frac{1}{2}} \dd r \Bigg ],
\end{align*}
where 
\begin{align*}
	c_H := \left [ \frac{2H}{(1-2H)\beta(1-2H, H+\frac{1}{2})}\right ]^{1/2}.
\end{align*}
Here, $\beta(\cdot, \cdot)$ is the Beta function. Define the operator $M^H: \EE \to \boH$ such that  
\begin{align}
	M^H \rind{[s,t]}(u) := K_{s,t}(u):=  K_{0,t}(u)\gind{[0,t]}{u} - K_{0,s}(u)\gind{[0,s]}{u}.\label{defnKthing}
\end{align}
It can be shown that 
\begin{align*}
	\langle K_{0,t}(\cdot) \rind{[0,t]}(\cdot), K_{0,s}(\cdot) \rind{[0,s]}(\cdot) \rangle_{\boH} = R_H(t,s).
\end{align*}
Define $\mathbb{H}$ to be the Hilbert space given as the completion of $\EE$ with respect to the inner product 
\begin{align*}
	\langle \rind{[0,s]}, \rind{[0,t]} \rangle_{\mathbb{H}} := \langle K_{0,t}(\cdot) \rind{[0,t]}(\cdot), K_{0,s}(\cdot) \rind{[0,s]}(\cdot) \rangle_{\boH} = R_H(t,s).
\end{align*}
The operator $M^H$ is clearly a linear isometry between $\EE$ and $\boH$, and can be extended to map $\mathbb{H}$ to $\boH$. 
\begin{remark}
\label{remark:mu}
It is clear that 
\begin{align*}
	\int_0^T  K_{r,s}(u) K_{r',s'}(u) \dd u &=  \langle K_{r,s}, K_{r',s'} \rangle_{\boH}\\
	&= \left \langle K_{0,s}(\cdot)\gind{[0,s]}{\cdot} - K_{0,r}(\cdot)\gind{[0,r]}{\cdot}, K_{0,s'}(\cdot)\gind{[0,s']}{\cdot} - K_{0,r'}(\cdot)\gind{[0,r']}{\cdot} \right \rangle_{\boH} \\
	&= R_H(s,s') + R_H(r,r') - R_H(s,r') - R_H(r,s') \\
	&= \Cov(B_s^H - B_r^H, B_{s'}^H - B_{r'}^H) \\
	&= \mu.
\end{align*}
\end{remark}
It is well known that the process $(B_t^H)$ defined as 
\begin{align}
	B_t^H := \int_0^t K_{0,t}(u) \dd B_u \label{fBmrep}
\end{align}
is an fBm with Hurst parameter $H$. Consider the collection of random variables $W^H := \{ W^H(\varphi) := \int_0^T M^H \varphi(u) \dd B_u: \varphi \in \mathbb{H} \}$. Then $W^H$ is an isonormal Gaussian process with respect to the underlying Hilbert space $\mathbb{H}$. Let $f \in C_b^{\infty}(\reals)$ and define the random variable $F = f(W^H(\varphi))$. Now since for any $\varphi \in \mathbb{H}$, we have $M^H\varphi \in \boH$, then this implies $W^H(\varphi) = W^B(M^H \varphi) \in W$. So the Malliavin derivative of $F$ with respect to the isonormal Gaussian process $W^B$ is 
\begin{align*}
	DF =  f'(W^H(\varphi)) M^H \varphi, 
\end{align*} 
and the $k$-fold Malliavin derivative is 
\begin{align*}
	D^kF =  f^{(k)}(W^H(\varphi)) (M^H \varphi)^{\otimes k}.
\end{align*} 
In particular, when $\varphi = \rind{[s,t]}$, then $DF =  f'(B_t^H - B_s^H) K_{s,t}$, and $D^k F = f^{(k)}(B_t^H - B_s^H) (K_{s,t})^{\otimes k}$.
\begin{remark}
Since $W^H$ is an isonormal Gaussian process with respect to the underlying Hilbert space $\mathbb{H}$, then it is of course possible to apply the Malliavin derivative to $W^H$-measurable random variables with respect to $W^H$ itself. So the Malliavin derivative of $F = f(W^H(\varphi))$ with respect to $W^H$, denoted by $D^H F$, is $D^H F = f'(W^H(\varphi)) \varphi$. However, for our purposes, it will be simpler to compute the Malliavin derivative with respect to $W^B$ instead, since the Wiener chaos machinery utilizes multiple Wiener integrals with respect to ordinary Brownian motion.
\end{remark}
\section{Finiteness of key integrals}
\label{appen:finitenessintegrals}
In this appendix, we prove the finiteness of key integrals encountered in \Cref{thm:convergenceL2odd} and \Cref{thm:convergenceL2evenrenorm} with restrictions imposed on the Hurst parameter $H$ and order of derivative $k$. Before we proceed, we first present some necessary preliminary information. First, recall the region $D_t = \{(r,s) \in \reals^2: 0 \leq r \leq s \leq t \}$. It is then true that the region $D_t^2$ can be partitioned into three regions, given by 
\begin{align}
\begin{split}
	D^2_{1,t} &:= \{(r,s,r',s') \in D_t^2 : r < r' < s < s' \}, \label{partitionregions}\\
	D^2_{2,t} &:=  \{(r,s,r',s') \in D_t^2 : r < r' < s' < s \}, \\
	D^2_{3,t} &:= \{(r, s, r', s') \in D_t^2 : r < s < r' < s' \}.
\end{split}
\end{align}
Recall the notation 
\begin{align*}
	\lambda &=  \Var(B_s^H - B_r^H) = |s - r|^{2H},\\
	\rho &= \Var(B^H_{s'} - B^H_{r'}) =  |s'-r'|^{2H}, \\
	\mu &=\Cov(B_s^H - B_r^H, B^H_{s'} - B^H_{r'}) =  \frac{1}{2}\left (|s - r'|^{2H} + |r - s'|^{2H} - |s - s'|^{2H} - |r - r'|^{2H}\right ).
\end{align*}
The following lemma from \citep{hu2001self} (see also \citep{jung2014tanaka}) will be essential. 
\begin{lemma}
\label{lemma:boundingregions}
\pdfbookmark[2]{\Cref{lemma:boundingregions}}{lemma:boundingregions}
\noindent
\begin{enumerate}[label = (\roman*), ref = \roman*]
\item \label{lemma:boundingregionsi}Let $(r,s,r',s') \in D^2_{1,t}$ (that is, $r < r' < s < s'$). Define $a = r' - r$, $b = s -r'$ and $c = s' - s$. Then 
\begin{align*}
	\lambda \rho - \mu^2 \geq K ((a+b)^{2H}c^{2H} + a^{2H}(b+c)^{2H}).
\end{align*}
\item \label{lemma:boundingregionsii}Let $(r,s,r',s') \in D^2_{2,t}$ (that is, $r < r' < s' < s$). Define $a = r' - r$, $b = s' - r'$ and $c = s - s'$. Then 
\begin{align*}
	\lambda \rho - \mu^2 \geq Kb^{2H} (a^{2H}+ c^{2H}).
\end{align*}
\item \label{lemma:boundingregionsiii}Let $(r,s,r',s') \in D^2_{3,t}$ (that is, $r < s < r' < s'$). Define $a = s -r$, $b = r' - s$ and $c = s' - r'$. Then
\begin{align*}
	\lambda \rho - \mu^2 \geq K (a^{2H} c^{2H}).
\end{align*}
\end{enumerate}
\end{lemma}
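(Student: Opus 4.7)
The plan is to view $\lambda\rho - \mu^2$ as the determinant of the covariance matrix of the centred Gaussian pair $(X,Y) := (B^H_s - B^H_r,\,B^H_{s'} - B^H_{r'})$, and then exploit Pitt's strong local nondeterminism for fractional Brownian motion. Concretely, I will use that there exists a constant $K = K_H > 0$ such that for any finite ordered partition $0 \leq t_0 < t_1 < \cdots < t_n$ and any coefficients $u_1,\dots,u_n \in \reals$,
\begin{align*}
\Var\!\left(\sum_{j=1}^n u_j (B^H_{t_j} - B^H_{t_{j-1}})\right) \;\geq\; K \sum_{j=1}^n u_j^2 (t_j - t_{j-1})^{2H}.
\end{align*}
The algebraic identity I would pair this with comes from computing the $L^2(\Omega)$ projection of $Y$ onto the span of $X$ (and vice versa):
\begin{align*}
\lambda\rho - \mu^2 \;=\; \lambda \cdot \Var\!\left(Y - \tfrac{\mu}{\lambda} X\right) \;=\; \rho \cdot \Var\!\left(X - \tfrac{\mu}{\rho} Y\right).
\end{align*}

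In each region I would then decompose $X$ and $Y$ into increments of $B^H$ over the disjoint subintervals cut out by the four ordered time points, so that the linear combination inside the variance above becomes a sum of the form to which Pitt's LND applies. For region 3, where $r < s < r' < s'$, the random variables $X$ and $Y$ are already single increments over disjoint intervals of length $a$ and $c$, so $Y - (\mu/\lambda) X$ is a two-term linear combination and LND immediately gives $\Var(Y - (\mu/\lambda)X) \geq K c^{2H}$; multiplying by $\lambda = a^{2H}$ yields (iii).

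For region 1, where $r < r' < s < s'$, I would set $X_1 := B^H_{r'} - B^H_r$, $X_2 := B^H_s - B^H_{r'}$, $X_3 := B^H_{s'} - B^H_s$, so that $X = X_1 + X_2$ and $Y = X_2 + X_3$. Then $Y - (\mu/\lambda)X = -(\mu/\lambda) X_1 + (1-\mu/\lambda) X_2 + X_3$, and LND across the three disjoint subintervals $[r,r']$, $[r',s]$, $[s,s']$ gives a lower bound dominated by its coefficient-$1$ term, namely $K c^{2H}$; multiplying by $\lambda = (a+b)^{2H}$ gives $\lambda\rho - \mu^2 \geq K(a+b)^{2H} c^{2H}$. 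The mirror computation $X - (\mu/\rho)Y = X_1 + (1-\mu/\rho)X_2 - (\mu/\rho)X_3$, combined with $\rho = (b+c)^{2H}$, gives the twin bound $\lambda\rho - \mu^2 \geq K a^{2H}(b+c)^{2H}$. Averaging the two (losing a harmless factor of $2$ in the constant) produces (i). Region 2, with $r < r' < s' < s$, is handled analogously by writing $X = X_1 + X_2 + X_3$ with $X_2 = Y$ and using the second form of the determinant identity: then $X - (\mu/\rho)Y = X_1 + (1-\mu/\rho)X_2 + X_3$, LND drops the middle term to yield $\geq K(a^{2H} + c^{2H})$, and multiplying by $\rho = b^{2H}$ gives (ii).

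The only real subtlety I anticipate is bookkeeping: in each case one must pick the right form of the determinant identity so that the increments one wishes to keep after applying LND carry a coefficient of magnitude exactly $1$, ensuring that the lower bound survives regardless of the sign or size of the ratios $\mu/\lambda$ and $\mu/\rho$ (which can be arbitrary subject to $\mu^2 \leq \lambda\rho$). The choice of form is forced by the shape of the target inequality in each region, and once that is settled the rest is essentially just reading off coefficients and invoking LND.
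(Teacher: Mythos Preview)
Your argument is correct. The determinant identity $\lambda\rho - \mu^2 = \lambda\,\Var(Y - (\mu/\lambda)X) = \rho\,\Var(X - (\mu/\rho)Y)$ together with Pitt's strong local nondeterminism for fBm does exactly what you claim in each of the three regions, and your bookkeeping of which form to use so that the surviving increment carries coefficient $1$ is right.

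As for comparison with the paper: there is nothing to compare against, because the paper does not supply a proof of this lemma. It is stated as a quotation from \cite{hu2001self} (with a pointer also to \cite{jung2014tanaka}) and then used as a black box in the estimates of \Cref{appen:finitenessintegrals}. Your LND-based argument is in fact the standard route by which these inequalities are obtained in that literature, so you have effectively reconstructed the cited result rather than offered an alternative to anything in the present paper.
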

\begin{remark}[Standard Inequalities]
We will make use of the following standard inequalities from analysis. Let $K, K_1$ and $K_2$ be a positive constants that may change from line to line.
\begin{framed}
\begin{enumerate}[label = (\roman*)]
\item $xy \leq \frac{x^p}{p} + \frac{y^q}{q} \leq K (x^p + y^q)$, where $p, q > 0$ and $q = \frac{p-1}{p}$ (Young's inequality).
\item $x^{\alpha}y^{\beta} \leq \alpha x + \beta y \leq K (x + y)$, where $\alpha, \beta > 0$ and $\alpha + \beta  = 1$ (Young's inequality variation).
\item $K_1(x + y)^\xi \leq (x^\xi+y^\xi) \leq K_2(x + y)^\xi$ for $\xi \in \reals$.
\item $(x + y)^\xi \leq (x^\xi+y^\xi)$, where $\xi \in (0,1)$.
\end{enumerate}
\end{framed}
\end{remark}
\begin{lemma}
\label{lemma:finitenessL2odd}
\pdfbookmark[2]{\Cref{lemma:finitenessL2odd}}{lemma:finitenessL2odd}
Consider \cref{keyintegralodd}, the key integral from the proof of \Cref{thm:convergenceL2odd}. Namely, 
\begin{align}
	\int_{D_t^2}  \frac{\gamma}{(1-\gamma^2)^{(4\hk-1)/2} \lambda^{\hk} \rho^{\hk}} \dd r \dd s \dd r' \dd s' \label{keyintegraloddappen}
\end{align}
where we recall $\gamma = \mu/(\sqrt{\lambda \rho})$ and $\hk \in \naturals$. Then this integral is finite if $H \in (0, \frac{2}{4 \hk - 1})$, where $\hk \in \naturals$. 
\end{lemma}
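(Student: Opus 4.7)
The plan is to rewrite the integrand using $\gamma=\mu/\sqrt{\lambda\rho}$ as
\begin{align*}
\frac{\gamma}{(1-\gamma^2)^{(4\hk-1)/2}\lambda^{\hk}\rho^{\hk}}=\frac{\mu\,\lambda^{\hk-1}\rho^{\hk-1}}{(\lambda\rho-\mu^2)^{(4\hk-1)/2}},
\end{align*}
to partition $D_t^2=D^2_{1,t}\cup D^2_{2,t}\cup D^2_{3,t}$ as in \cref{partitionregions}, and in each piece to change variables $(r,s,r',s')\mapsto(r,a,b,c)$ according to \Cref{lemma:boundingregions}. The free variable $r$ contributes only a bounded factor, so the task reduces to finiteness of a triple integral in $(a,b,c)\in(0,t)^3$ in each region. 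Inserting the Cauchy--Schwarz bound $|\mu|\leq\sqrt{\lambda\rho}$ (refined where necessary, see below) together with the local-nondeterminism lower bound on $\lambda\rho-\mu^2$ from \Cref{lemma:boundingregions} reduces each integrand to a product of power laws in $(a,b,c)$, whose convergence is verified by direct one-dimensional integration.

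In $D^2_{1,t}$, the AM--GM estimate $(a+b)^{2H}c^{2H}+a^{2H}(b+c)^{2H}\geq 2(ac)^H(a+b)^H(b+c)^H$ combined with $|\mu|\leq\sqrt{\lambda\rho}=(a+b)^H(b+c)^H$ bounds the integrand, after cancellation, by a constant times
\begin{align*}
(ac)^{-H(4\hk-1)/2}(a+b)^{-H/2}(b+c)^{-H/2}.
\end{align*}
The $a$- and $c$-integrations converge near zero precisely when $H(4\hk-1)/2<1$, i.e., $H<\frac{2}{4\hk-1}$, while the $b$-integration imposes only the harmless condition $H<1$. Region $D^2_{2,t}$ is treated similarly after a further split according to the ordering of $a,b,c$; in every sub-case in which $b$ is not the smallest of the three variables, an analogous power-counting yields the same binding constraint $H<\frac{2}{4\hk-1}$.

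The delicate situation is region $D^2_{3,t}$, together with the sub-case of $D^2_{2,t}$ in which $b$ is the smallest of $a,b,c$; in both, the Cauchy--Schwarz bound on $|\mu|$ is too crude. In $D^2_{3,t}$, \Cref{lemma:boundingregionsiii} gives $\lambda\rho-\mu^2\asymp\lambda\rho$, so $(1-\gamma^2)^{-(4\hk-1)/2}$ stays bounded and the integrand is of order $|\mu|/(ac)^{H(2\hk+1)}$; the bound $|\mu|\leq(ac)^H$ then yields only the insufficient condition $H<\frac{1}{2\hk}$. The main obstacle is therefore to produce a sharper estimate on $\mu$. Using the identity
\begin{align*}
\mu=H(2H-1)\int_0^a\int_0^c(x+b+y)^{2H-2}\,dy\,dx
\end{align*}
(valid in $D^2_{3,t}$, obtained by two applications of the fundamental theorem of calculus to the explicit expression for $\mu$) together with the monotonicity of $u^{2H-2}$ for $H<1$, I would obtain the sharp bound $|\mu|\leq C\,ac\,b^{2H-2}$. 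Splitting $D^2_{3,t}$ into $\{ac\leq b^2\}$ (where this sharp bound dominates) and $\{ac>b^2\}$ (where $|\mu|\leq(ac)^H$ is tighter) and integrating yields convergence for $H<\frac{3}{4\hk}$, comfortably exceeding the target range since $\frac{3}{4\hk}>\frac{2}{4\hk-1}$ for all $\hk\geq 1$. The same refined strategy --- using $|\mu|\leq b^{2H}$ when $H\leq\frac{1}{2}$ and $|\mu|\leq C\,b(a^{2H-1}+c^{2H-1})$ when $H>\frac{1}{2}$, both derived directly from the explicit formula for $\mu$ in $D^2_{2,t}$ --- handles the remaining $b$-smallest sub-case and completes the argument.
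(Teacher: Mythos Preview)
Your overall architecture matches the paper's exactly: rewrite the integrand as $\mu\lambda^{\hk-1}\rho^{\hk-1}/(\lambda\rho-\mu^2)^{(4\hk-1)/2}$, split $D_t^2$ into the three regions of \cref{partitionregions}, and in each combine the local-nondeterminism bounds of \Cref{lemma:boundingregions} with a suitable upper estimate on $|\mu|$. Your Case~1 treatment (Cauchy--Schwarz $|\mu|\leq\sqrt{\lambda\rho}$ plus AM--GM on the denominator) is a tidier alternative to the paper's three-term split $|\mu|\leq K(a^{2H}+b^{2H}+c^{2H})$. Your Case~3 treatment --- the dichotomy $\{ac\leq b^2\}$ versus $\{ac>b^2\}$, pairing the sharp bound $|\mu|\leq C\,ac\,b^{2H-2}$ with the crude $|\mu|\leq(ac)^H$ --- is genuinely different from the paper's, which instead inserts the Young-type inequality $(b+au+cv)\geq K b^{\alpha}(aucv)^{\beta/2}$ inside the double-integral representation and then tunes $\alpha+\beta=1$; the paper's route forces a separate argument for $\hk=1$, $H\in[3/5,2/3)$, whereas yours yields the uniform (and comfortably sufficient) range $H<3/(4\hk)$.

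There is, however, one genuine inaccuracy in your Case~2 description. You assert that when $b$ is \emph{not} the smallest of $a,b,c$ the Cauchy--Schwarz bound $|\mu|\leq(a+b+c)^Hb^H$ already suffices by ``analogous power-counting''. This fails for $\hk=1$ and $H\in[3/5,2/3)$: after the LND bound the integrand is at most $C\,b^{-2H}(a+c)^{-3H}$, and on, say, $\{b\geq c\}$ one integrates out $a$ and $b$ to arrive at $\int_0^t c^{\,2-5H}\,\dd c$, which diverges once $H\geq 3/5$. The repair is immediate and is precisely what the paper does: discard the ordering split and apply your refined bounds $|\mu|\leq C b^{2H}$ (for $H<\tfrac12$) and $|\mu|\leq C b(a^{2H-1}+c^{2H-1})\leq C'b$ (for $H\geq\tfrac12$, relevant only when $\hk=1$) uniformly over all of $D^2_{2,t}$. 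With that correction your argument goes through.
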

\begin{proof}
First, notice that
\begin{align*}
	  \frac{ \gamma}{(1-\gamma^2)^{(4\hk-1)/2} \lambda^{\hk} \rho^{\hk}} = \frac{\mu \lambda^{\hk  - 1} \rho^{\hk -1}}{(\lambda \rho - \mu^2)^{(4 \hk - 1)/2} }.
\end{align*}
Moreover, utilizing that the region $D_t^2$ can be partitioned into $D^2_{1,t}, D^2_{2,t}$ and $D^2_{3,t}$ given in \cref{partitionregions}, then finiteness of the integral \cref{keyintegraloddappen} is guaranteed if and only if
\begin{align*}
	\int_{ D_{i,t}^2} \frac{\mu \lambda^{\hk - 1} \rho^{\hk - 1}}{(\lambda \rho - \mu^2)^{(4 \hk - 1)/2}}   \dd r \dd s \dd r' \dd s'
\end{align*}
is finite for each $i =1, 2,3$. Define $\bH(\hk) := \frac{2}{4\hk - 1}$, which is the proposed upper bound on $H$.  In what follows, we will make use of these facts:
\begin{itemize}
\item We will denote $K$ to be a strictly positive constant whose value may change line by line.
\item $\bH(\hk)$ is strictly decreasing and its maximum is $\bH(1) = 2/3$.
\item We will often need to divide the cases into $H \geq 1/2$ and $H < 1/2$. Notice that $\bH(\hk) \geq 1/2$ if and only if $\hk = 1$.
\end{itemize}
We now proceed case by case.
\begin{enumerate}[label = \textbf{Case \arabic*}:]
\item We start by considering the integration over $D^2_{1,t}$. Following the notation in \Cref{lemma:boundingregions} \cref{lemma:boundingregionsi}, we have that $a+b = s - r $ and $b + c = s' - r'$. This gives that $\lambda = (a+b)^{2H}$ and $\rho = (b+c)^{2H}$. Now utilizing the lemma, we have 
\begin{align}
	\int_{ D_{1,t}^2} \frac{\mu \lambda^{\hk  - 1} \rho^{\hk - 1}}{(\lambda \rho - \mu^2)^{(4 \hk - 1)/2}} \dd r \dd s \dd r' \dd s' 
	&\leq  K \int_{[0,t]^3} \frac{|\mu| (a+b)^{2H(\hk - 1)} (b+c)^{2H(\hk - 1)}}{((a+b)^{2H}c^{2H} + a^{2H}(b+c)^{2H})^{(4 \hk - 1)/2}}  \dd a \dd b \dd c \nonumber\\
	&\leq  K \int_{[0,t]^3} \frac{|\mu| (a+b)^{2H(\hk - 1)} (b+c)^{2H(\hk - 1)}}{((a+b)c)^{\frac{H}{2} (4\hk  - 1)}(a(b+c))^{\frac{H}{2} (4 \hk - 1)}} \dd a \dd b \dd c  \label{youngscase1} \\
		&=  K \int_{[0,t]^3} \frac{|\mu|}{(a+b)^{\frac{3H}{2}}(b+c)^{\frac{3H}{2}} a^{\frac{H}{2} (4\hk - 1)}c^{\frac{H}{2}(4\hk- 1)}} \dd a \dd b \dd c \label{keyboundcase1}
\end{align}
where we used Young's inequality, $x^{1/2}y^{1/2} \leq \frac{x}{2} + \frac{y}{2}$ in \cref{youngscase1}. Now
\begin{align*}
	2 \mu &= \big ((a+b+c)^{2H} +  b^{2H} - a^{2H} - c^{2H} \big )
\end{align*}
which gives the trivial bound
\begin{align*}
		|\mu| &\leq K \big( a^{2H} + b^{2H} + c^{2H} \big).
\end{align*}
Thus we need to show that 
\begin{align*}
	K \int_{[0,t]^3} \frac{a^{2H} + b^{2H} + c^{2H}}{(a+b)^{\frac{3H}{2}}(b+c)^{\frac{3H}{2}} a^{\frac{H}{2} (4\hk - 1)}c^{\frac{H}{2}(4\hk- 1)}} \dd a \dd b \dd c
\end{align*}
is finite. So we look at each of the three terms involving $a^{2H}, b^{2H}$ and $c^{2H}$ in the numerator separately. First start with the $b^{2H}$ term. Then,
\begin{align*}
	&K \int_{[0,t]^3} \frac{ b^{2H}}{(a+b)^{\frac{3H}{2}} (b+c)^{\frac{3H}{2}} a^{\frac{H}{2} (4 \hk - 1)}c^{\frac{H}{2} (4 \hk - 1)}} \dd a \dd b \dd c \\
	&\leq K \int_{[0,t]^3} \frac{b^{2H}}{b^{\frac{3H}{2}} b^{\frac{3H}{2}} a^{\frac{H}{2} (4 \hk - 1)}c^{\frac{H}{2} (4 \hk - 1)}} \dd a \dd b \dd c \\
	&= K \int_{[0,t]^3} \frac{1}{b^{H} a^{\frac{H}{2} (4 \hk - 1)}c^{\frac{H}{2} (4 \hk - 1)}} \dd a \dd b \dd c
\end{align*}
which is clearly finite for $H< \bH(\hk)$. For the $a^{2H}$ term we have  
\begin{align*}
	&K \int_{[0,t]^3} \frac{ a^{2H}}{(a+b)^{\frac{3H}{2}} (b+c)^{\frac{3H}{2}} a^{\frac{H}{2} (4 \hk - 1)}c^{\frac{H}{2} (4 \hk - 1)}} \dd a \dd b \dd c \\
	&= K \int_{[0,t]^3} \frac{1}{(a+b)^{\frac{3H}{2}} (b+c)^{\frac{3H}{2}}a^{\frac{H}{2} (4 \hk - 5)}c^{\frac{H}{2} (4 \hk - 1)}} \dd a \dd b \dd c \\
	& \leq K \int_{[0,t]^3} \frac{1}{a^{\frac{3H}{2}} b^{\frac{3H}{2}}a^{\frac{H}{2} (4 \hk - 5)}c^{\frac{H}{2} (4 \hk - 1)}} \dd a \dd b \dd c \\
	&= K \int_{[0,t]^3} \frac{1}{b^{\frac{3H}{2}} a^{\frac{H}{2} (4 \hk - 2)}c^{\frac{H}{2} (4 \hk - 1)}} \dd a \dd b \dd c.
\end{align*}
Clearly the $c$ term exponent is less than 1 for $H < \bH(\hk)$. Looking at the $b$ term exponent, we have that $H < \bH(\hk) < \bH(1) = 2/3$, and so the $b$ term exponent is less than 1. Observing the $a$ term exponent, we have 
\begin{align*}
	\frac{H}{2} (4 \hk - 2) < \frac{\bH(\hk)}{2} (4 \hk - 2) = \frac{4 \hk - 2}{4\hk - 1} < 1
\end{align*}
for any $\hk \geq 1$. So this integral is finite. For the $c^{2H}$ term, notice that there is symmetry between $a$ and $c$ in the integrand of \cref{youngscase1}. Thus, this case is identical to the case of $a^{2H}$. Hence, we have completed \textbf{Case 1}. 
\item Next we look at the integration over $D^2_{2,t}$. Following the notation in \Cref{lemma:boundingregions} \cref{lemma:boundingregionsii}, we have that $a+b +c = s - r $ and $b = s' - r'$. This gives that $\lambda = (a+b+c)^{2H}$ and $\rho = b^{2H}$. Now utilizing the lemma, we have 
\begin{align}
	\int_{ D_{2,t}^2} \frac{|\mu| \lambda^{\hk  - 1} \rho^{\hk - 1}}{(\lambda \rho - \mu^2)^{(4 \hk - 1)/2}} \dd r \dd s \dd r' \dd s' \nonumber
	&\leq  K \int_{[0,t]^3} \frac{|\mu| (a+b+c)^{2H(\hk - 1)} b^{2H(\hk - 1)}}{(b^{2H} (a^{2H}+ c^{2H}))^{(4 \hk - 1)/2}}  \dd a \dd b \dd c \\
	&= K \int_{[0,t]^3} \frac{|\mu| (a+b+c)^{2H(\hk - 1)}}{b^{H(2\hk + 1)} (a^{2H}+ c^{2H})^{(4 \hk - 1)/2}}  \dd a \dd b \dd c  \nonumber \\
	&\leq  K \int_{[0,t]^3} \frac{|\mu| (a+b+c)^{2H(\hk - 1)}}{b^{H(2\hk + 1)} (a+c)^{H(4 \hk - 1)}}  \dd a \dd b \dd c \label{keyboundcase2}
\end{align}
where we have used $K_1(x^p + y^p) \leq (x+y)^p \leq K_2(x^p + y^p)$ in \cref{keyboundcase2}. Now we have 
\begin{align*}
	2 \mu = (a+b)^{2H} - a^{2H} + (b+c)^{2H} - c^{2H},
\end{align*}
which by \citep{hu2001self}, can be rewritten as 
\begin{align}
	2 \mu = 2Hb \int_0^1 \big ( (a+bu)^{2H-1} + (c+bu)^{2H-1} \big ) \dd u.  \label{intrepcase2}
\end{align}
By this integral representation, it is clear that $\mu$ is non-negative. So we bracket in the following way.
\begin{align*}
	2\mu = \left (  (a+b)^{2H} - a^{2H} \right ) + \left (  (b+c)^{2H} - c^{2H}\right ).
\end{align*}
Now for $H < 1/2$, we have $(a+b)^{2H} \leq \big (a^{2H} + b^{2H} \big )$, and similar for the other term, since the exponents are between $0$ and $1$. This gives $\mu \leq K b^{2H}$. Now for $H \geq 1/2$, following the integral representation \cref{intrepcase2}, notice that the exponents within the integrand are non-negative. Thus we can bound the integral by a constant. Succinctly
\begin{align*}
\mu \leq
	\begin{cases}
		Kb^{2H}, &H < 1/2, \\[.3cm]
		Kb, &H \geq 1/2.
	\end{cases}
\end{align*}
First consider $H < 1/2$. Leading on from \cref{keyboundcase2}, we have
\begin{align*}
	 &K \int_{[0,t]^3} \frac{\mu (a+b+c)^{2H(\hk - 1)}}{b^{H(2\hk + 1)} (a+c)^{H(4 \hk - 1)}}  \dd a \dd b \dd c \\
	 &\leq K\int_{[0,t]^3} \frac{b^{2H}(a+b+c)^{2H(\hk - 1)}}{b^{H(2\hk + 1)} (a+c)^{H(4 \hk - 1)}}  \dd a \dd b \dd c \\ 
	  &\leq K\int_{[0,t]^3} \frac{(a+b+c)^{2H(\hk - 1)}}{b^{H(2\hk - 1)} a^{\frac{H}{2}(4 \hk - 1)}c^{\frac{H}{2}(4 \hk - 1)}}  \dd a \dd b \dd c \\
	  &\leq K\int_{[0,t]^3} \frac{1}{b^{H(2\hk - 1)} a^{\frac{H}{2}(4 \hk - 1)}c^{\frac{H}{2}(4 \hk - 1)}}  \dd a \dd b \dd c.
\end{align*}
The $a$ term and $c$ term exponents are obviously less than $1$ for $H < \bH(\hk)$. For the $b$ term exponent, we have $H(2\hk -1) < \bH(\hk) (2\hk-1) < 1$ for all $\hk \geq 1$. So this integral is finite.
Now for $H \geq 1/2$, notice that $\bH(\hk) \geq 1/2$ if and only if $\hk = 1$. So it suffices to look at the case of $H \in [1/2, 2/3)$ and $\hk = 1$. Following from \cref{keyboundcase1}, we have
\begin{align*}
	 &K \int_{[0,t]^3} \frac{|\mu|}{b^{3H} (a+c)^{3H}}  \dd a \dd b \dd c \\
	 &\leq K \int_{[0,t]^3}\frac{b }{b^{3H} (a+c)^{3H}}  \dd a \dd b \dd c \\
	 &= K \int_{[0,t]^3}\frac{1}{b^{3H-1} (a+c)^{3H}}  \dd a \dd b \dd c
\end{align*}
which is finite, where we have used that $\int_{[0,1]^2} \frac{1}{(x+y)^\xi} \dd x \dd y < \infty$ if and only if $\xi < 2$. This completes \textbf{Case 2}.
\item Finally, we look at the integration over $D^2_{3,t}$. Following the notation in \Cref{lemma:boundingregions} \cref{lemma:boundingregionsiii}, we have that $a = s - r $ and $c = s' - r'$. This gives that $\lambda = a^{2H}$ and $\rho = c^{2H}$. Now utilizing the lemma, we have 
\begin{align}
	\int_{ D_{3,t}^2} \frac{|\mu| \lambda^{\hk  - 1} \rho^{\hk - 1}}{(\lambda \rho - \mu^2)^{(4 \hk - 1)/2}} \dd r \dd s \dd r' \dd s' \nonumber
	&\leq  K \int_{[0,t]^3} \frac{|\mu| a^{2H(\hk - 1)} c^{2H(\hk - 1)}}{(a^{2H}c^{2H})^{(4 \hk - 1)/2}}  \dd a \dd b \dd c \nonumber \\
	&=K \int_{[0,t]^3} \frac{|\mu| a^{2H(\hk - 1)} c^{2H(\hk - 1)}}{(ac)^{H(4 \hk - 1)}}  \dd a \dd b \dd c \nonumber \\
	&=K \int_{[0,t]^3} \frac{|\mu|}{(ac)^{H(2 \hk + 1)}}  \dd a \dd b \dd c. \label{keyboundcase3}
\end{align}
Now we have that 
\begin{align*}
	2 \mu = (a+b+c)^{2H} + b^{2H} - (a+b)^{2H} - (b+c)^{2H}.
\end{align*}
By \citep{hu2001self}, this is equivalent to 
\begin{align}
	2 \mu = 2H(2H-1)ac \int_0^1 \int_0^1 (b + au + cv)^{2H-2} \dd u \dd v.\label{intrepcase3}
\end{align}
Using Young's inequality, we have $(b + (au + cv)) \geq Kb^{\alpha} (au + cv)^{\beta} \geq Kb^{\alpha}(aucv)^{\beta/2}$, where $\alpha + \beta = 1$, and $\alpha,\beta > 0$. Notice that the exponent $2H-2$ is negative for any $H$. Thus we have 
\begin{align*}
	|\mu| &\leq K ac b^{\alpha(2H-2)} \int_0^1 \int_0^1 ((au)^{\beta/2}(cv)^{\beta/2})^{2H-2} \dd u \dd v \\
		&\leq K b^{2\alpha(H-1)} a^{\beta(H-1) + 1} c^{\beta(H-1) + 1}.
\end{align*}
Leading on from \cref{keyboundcase3}, we have
\begin{align}
	&K \int_{[0,t]^3} \frac{|\mu|}{(ac)^{H(2 \hk + 1)}}  \dd a \dd b \dd c \nonumber \\
	&\leq K \int_{[0,t]^3} \frac{b^{2\alpha(H-1)} a^{\beta(H-1) + 1} c^{\beta(H-1) + 1}}{(ac)^{H(2 \hk + 1)}}  \dd a \dd b \dd c \nonumber\\ 
	& =  K \int_{[0,t]^3} \frac{1}{b^{2\alpha(1-H)}(ac)^{H(2 \hk + 1-\beta)+\beta-1}}  \dd a \dd b \dd c. \label{secondkeyboundcase3}
\end{align}
Choose $\alpha = \beta = 1/2$. Then this is equal to 
\begin{align*}
	K \int_{[0,t]^3} \frac{1}{b^{(1-H)}(ac)^{H(2 \hk + \frac{1}{2})-\frac{1}{2}}}  \dd a \dd b \dd c.
\end{align*}
Clearly the $b$ term exponent is always less than $1$ for any $H$. Studying the $ac$ term exponent, we have 
\begin{align*}
	H\left(2 \hk + \frac{1}{2}\right)-\frac{1}{2} &< H(\hk)\left (2 \hk + \frac{1}{2}\right )-\frac{1}{2}  \\&= \frac{4\hk + 1}{4\hk - 1} - \frac{1}{2}
\end{align*}
which is less than $1$ for $\hk > 1$. For $\hk =1$, the $ac$ term exponent is less than $1$ for $H \in [0, 3/5)$. So now we must deal with the case of $\hk = 1$ and $H \in [3/5, 2/3)$ separately. Instead, consider $\hk = 1$ and $H \in [1/2, 2/3)$, as this range of $H$ will be more telling as to why the subsequent methodology will not work for $H < 1/2$. Leading on from \cref{secondkeyboundcase3}, we must find $\alpha, \beta > 0$ with $\alpha + \beta = 1$ such that 
\begin{align*}
	K \int_{[0,t]^3} \frac{1}{b^{2\alpha(1-H)}(ac)^{H(3-\beta)+\beta-1}}  \dd a \dd b \dd c
\end{align*}
is finite. Focusing on the $ac$ term exponent, we must demand that $H(3 - \beta) + \beta - 1 < 1$ or equivalently, $\beta < (2- 3H)/(1-H)$. Define
\begin{align*}
	\beta(H) := \frac{2 - 3H}{1-H}.
\end{align*}
Notice that $\beta(H) \in (0, 1]$ for $H \in [1/2, 2/3)$. So choose $\beta < \beta(H)$, which will guarantee that the $ac$ term exponent is less than 1. Lastly, observing the $b$ term exponent, we have $2\alpha(1-H) <2(1-H)$, which is less than $1$ for any $H \in [1/2, 2/3)$. This completes \textbf{Case 3}.
\end{enumerate}
\end{proof}
\begin{lemma}
\label{lemma:finitenessL2even}
\pdfbookmark[2]{\Cref{lemma:finitenessL2even}}{lemma:finitenessL2even}
Consider \cref{keyintegraleven}, the key integral from the proof of \Cref{thm:convergenceL2evenrenorm}. Namely, 
\begin{align}
	\int_{D_t^2} \frac{\gamma^2}{(1-\gamma^2)^{(4k' + 1)/2} \lambda^{k'+1/2} \rho^{k'+1/2}} \dd r \dd s \dd r' \dd s' \label{keyintegralevenappen}
\end{align}
where we recall $\gamma = \mu/(\sqrt{\lambda \rho})$ and $k' \in \naturals$. Then this integral is finite if $H \in (0, \frac{2}{4k' + 1})$ where $k' \in \naturals$. 
\end{lemma}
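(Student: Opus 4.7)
The plan is to mirror the three-region strategy used in the proof of \Cref{lemma:finitenessL2odd}. First I rewrite the integrand in the form
\begin{align*}
	\frac{\gamma^2}{(1-\gamma^2)^{(4k'+1)/2}\lambda^{k'+\frac{1}{2}}\rho^{k'+\frac{1}{2}}} = \frac{\mu^2\,\lambda^{k'-1}\rho^{k'-1}}{(\lambda\rho-\mu^2)^{(4k'+1)/2}},
\end{align*}
so that the quantity on each subregion $D^2_{i,t}$ from \cref{partitionregions} depends only on $\mu$ and $\lambda\rho-\mu^2$, the latter being handled directly by \Cref{lemma:boundingregions}. Write $\bH(k'):=\tfrac{2}{4k'+1}$, and note that $\bH(k')<\tfrac{1}{2}$ for every $k'\in\naturals$, so we remain in the $H<1/2$ regime throughout.

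On $D^2_{1,t}$, after applying \Cref{lemma:boundingregions} \cref{lemma:boundingregionsi} together with the arithmetic-geometric mean inequality in the denominator exactly as in \cref{youngscase1}, the exponent arithmetic $\tfrac{H}{2}(4k'+1)-2H(k'-1)=\tfrac{5H}{2}$ reduces the task to
\begin{align*}
	\int_{[0,t]^3}\frac{\mu^2}{(a+b)^{5H/2}(b+c)^{5H/2}\,a^{H(4k'+1)/2}c^{H(4k'+1)/2}}\,\dd a\,\dd b\,\dd c,
\end{align*}
and the trivial bound $\mu^2\les K(a^{4H}+b^{4H}+c^{4H})$ splits this into three pieces, each of which is integrable for $H<\bH(k')$ by the same routine exponent checks as in Case~1 of \Cref{lemma:finitenessL2odd}. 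On $D^2_{2,t}$, using \Cref{lemma:boundingregions} \cref{lemma:boundingregionsii} together with the bound $|\mu|\les Kb^{2H}$ (valid for $H<1/2$, as established in Case~2 there), matters reduce to
\begin{align*}
	\int_{[0,t]^3}\frac{\dd a\,\dd b\,\dd c}{b^{H(2k'-1)}(a+c)^{H(4k'+1)}},
\end{align*}
whose finiteness for $H<\bH(k')$ follows from $H(2k'-1)<1$ together with $H(4k'+1)<2$, the latter being exactly the definition of $\bH(k')$.

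The main obstacle is the integration over $D^2_{3,t}$, since the bound from \Cref{lemma:boundingregions} \cref{lemma:boundingregionsiii} no longer contains a helpful $b$-factor. Using that bound reduces the task to showing finiteness of
\begin{align*}
	\int_{[0,t]^3}\frac{\mu^2}{(ac)^{H(2k'+3)}}\,\dd a\,\dd b\,\dd c.
\end{align*}
I plan to exploit the integral representation $2\mu=2H(2H-1)ac\int_0^1\int_0^1 (b+au+cv)^{2H-2}\,\dd u\,\dd v$ combined with the Young-type estimate $(b+au+cv)\geq Kb^{\alpha}(aucv)^{\beta/2}$, $\alpha+\beta=1$, to obtain $|\mu|\les Kb^{2\alpha(H-1)}a^{\beta(H-1)+1}c^{\beta(H-1)+1}$, whose square yields $\mu^2\les Kb^{4\alpha(H-1)}a^{2\beta(H-1)+2}c^{2\beta(H-1)+2}$. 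After substitution, the integral is finite provided $\alpha,\beta>0$ with $\alpha+\beta=1$ can be chosen so that both $4\alpha(1-H)<1$ and $H(2k'+3)+2\beta(1-H)-2<1$ hold simultaneously. A direct computation shows such a choice exists precisely when $H<\tfrac{3}{4k'+2}$, a threshold strictly larger than $\bH(k')=\tfrac{2}{4k'+1}$ for every $k'\in\naturals$; fine-tuning this Young split is the delicate step, but once it is made the argument closes and establishes the advertised range $H\in(0,\bH(k'))$.
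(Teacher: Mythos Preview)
Your proposal is correct and follows essentially the same three-region strategy as the paper's own proof, including the same rewriting of the integrand, the same bounds on $\mu$ in each case, and the same Young-type split in Case~3. The only cosmetic difference is that in Case~3 the paper simply fixes the specific choice $\alpha=\tfrac{1}{4}$, $\beta=\tfrac{3}{4}$ and checks directly that the resulting exponents $1-H$ and $H(2k'+\tfrac{3}{2})-\tfrac{1}{2}$ are both below $1$ for $H<\tfrac{2}{4k'+1}$, whereas you instead solve for the full range of admissible $(\alpha,\beta)$ and obtain the slightly sharper threshold $H<\tfrac{3}{4k'+2}$ for that case; either route closes the argument.
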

\begin{proof}
Notice that the integrand can be rewritten as
\begin{align*}
	\frac{\gamma^{2}}{(1-\gamma^2)^{(4k' + 1)/2} \lambda^{k'+1/2} \rho^{k'+1/2}} = \frac{\mu^{2} \lambda^{k'-1}\rho^{k'-1}}{(\lambda \rho - \mu^2)^{(4k'+1)/2}}.
\end{align*}
Thus, finiteness of \cref{keyintegralevenappen} is guaranteed if and only if
\begin{align*}
	\int_{ D_{i,t}^2} \frac{\mu^{2} \lambda^{k'-1}\rho^{k'-1}}{(\lambda \rho - \mu^2)^{(4k'+1)/2}} \dd r \dd s \dd r' \dd s'
\end{align*}
is finite for each $i =1, 2,3$.  Define $\tH(k') := \frac{2}{4k' + 1}$, which is the proposed upper bound on $H$. The following steps are very similar to the steps in the proof of \Cref{lemma:finitenessL2odd}, and so we will skip some details. In what follows, we will make use of these facts:
\begin{itemize}
\item We will denote $K$ to be a strictly positive constant whose value may change line by line.
\item $\tH(k')$ is strictly decreasing and its maximum is $\tH(1) = 2/5$. This means that, unlike the odd case, we do not need to consider the case of $H \geq 1/2$.
\end{itemize}
We now proceed case by case.
\begin{enumerate}[label = \textbf{Case \arabic*}:]
\item We start by considering the integration over $D^2_{1,t}$. Following the notation in \Cref{lemma:boundingregions} \cref{lemma:boundingregionsi}, we have that $a+b = s - r $ and $b + c = s' - r'$. This gives that $\lambda = (a+b)^{2H}$ and $\rho = (b+c)^{2H}$. Now utilizing the lemma, we have 
\begin{align}
	\int_{ D_{1,t}^2} \frac{\mu^2 \lambda^{k'  - 1} \rho^{k' - 1}}{(\lambda \rho - \mu^2)^{(4 k' + 1)/2}} \dd r \dd s \dd r' \dd s' 
	&\leq  K \int_{[0,t]^3} \frac{\mu^2 (a+b)^{2H(k' - 1)} (b+c)^{2H(k' - 1)}}{((a+b)^{2H}c^{2H} + a^{2H}(b+c)^{2H})^{(4k' + 1)/2}}  \dd a \dd b \dd c \nonumber \\
	&\leq  K \int_{[0,t]^3} \frac{\mu^2 (a+b)^{2H(k'- 1)} (b+c)^{2H(k' - 1)}}{((a+b)c)^{\frac{H}{2} (4k' + 1)}(a(b+c))^{\frac{H}{2} (4k' + 1)}} \dd a \dd b \dd c \label{youngsevencase1} \\
		&=  K \int_{[0,t]^3} \frac{\mu^2 \dd a \dd b \dd c}{(a+b)^{\frac{5H}{2}}(b+c)^{\frac{5H}{2}} a^{\frac{H}{2} (4k'+1)}c^{\frac{H}{2}(4k'+ 1)}} \label{keyboundevencase1}
\end{align}
where we used Young's inequality, $x^{1/2}y^{1/2} \leq \frac{x}{2} + \frac{y}{2}$ in \cref{youngsevencase1}. Now
\begin{align*}
	2 \mu &= \big ((a+b+c)^{2H} +  b^{2H} - a^{2H} - c^{2H} \big )
\end{align*}
which gives the trivial bound
\begin{align*}
		\mu^2 &\leq K \big( a^{4H} + b^{4H} + c^{4H} \big).
\end{align*}
Thus we need to show that 
\begin{align*}
	K \int_{[0,t]^3} \frac{a^{4H} + b^{4H} + c^{4H}}{(a+b)^{\frac{5H}{2}}(b+c)^{\frac{5H}{2}} a^{\frac{H}{2} (4k' + 1)}c^{\frac{H}{2}(4k' + 1)}} \dd a \dd b \dd c
\end{align*}
is finite. So we look at each of the three terms in the numerator involving $a^{4H}, b^{4H}$ and $c^{4H}$ separately. First start with the $b^{4H}$ term. Then,
\begin{align*}
	&K \int_{[0,t]^3} \frac{ b^{4H}}{(a+b)^{\frac{5H}{2}} (b+c)^{\frac{5H}{2}} a^{\frac{H}{2} (4 k' + 1)}c^{\frac{H}{2} (4 k' + 1)}} \dd a \dd b \dd c \\
	&\leq K \int_{[0,t]^3} \frac{b^{4H}}{b^{\frac{5H}{2}} b^{\frac{5H}{2}} a^{\frac{H}{2} (4 k' + 1)}c^{\frac{H}{2} (4 k' + 1)}} \dd a \dd b \dd c \\
	&= K \int_{[0,t]^3} \frac{1}{b^{H} a^{\frac{H}{2} (4 k' + 1)}c^{\frac{H}{2} (4 k' + 1)}} \dd a \dd b \dd c
\end{align*}
which is clearly finite for $H< \tH(k')$. For the $a^{4H}$ term we have  
\begin{align*}
	&K \int_{[0,t]^3} \frac{ a^{4H}}{(a+b)^{\frac{5H}{2}} (b+c)^{\frac{5H}{2}} a^{\frac{H}{2} (4 k' + 1)}c^{\frac{H}{2} (4 k' + 1)}} \dd a \dd b \dd c \\
	&= K \int_{[0,t]^3} \frac{1}{(a+b)^{\frac{5H}{2}} (b+c)^{\frac{5H}{2}}a^{\frac{H}{2} (4 k' - 7)}c^{\frac{H}{2} (4 k' + 1)}} \dd a \dd b \dd c \\
	& \leq K \int_{[0,t]^3} \frac{1}{a^{\frac{5H}{2}} b^{\frac{5H}{2}}a^{\frac{H}{2} (4 k' - 7)}c^{\frac{H}{2} (4 k' + 1)}} \dd a \dd b \dd c \\
	&= K \int_{[0,t]^3} \frac{1}{b^{\frac{5H}{2}} a^{\frac{H}{2} (4 k' - 2)}c^{\frac{H}{2} (4 k' + 1)}} \dd a \dd b \dd c.
\end{align*}
Clearly the $c$ term exponent is less than 1 for $H < \tH(k')$. Looking at the $b$ term exponent, we have that $H < \tH(k') < \tH(1) = 2/5$, and so the $b$ term exponent is less than 1. Observing the $a$ term exponent, we have 
\begin{align*}
	\frac{H}{2} (4 k' - 2) < \frac{\tH(k')}{2} (4k' - 2) = \frac{4 k'- 2}{4k' + 1} < 1
\end{align*}
for any $k' \geq 1$. So this integral is finite. For the $c^{4H}$ term, notice that there is symmetry between $a$ and $c$ in the integrand of \cref{youngsevencase1}. Thus, this case is identical to the case of $a^{4H}$. Hence, we have completed \textbf{Case 1}. 
\item Next we look at the integration over $D^2_{2,t}$. Following the notation in \Cref{lemma:boundingregions} \cref{lemma:boundingregionsii}, we have that $a+b +c = s - r $ and $b = s' - r'$. This gives that $\lambda = (a+b+c)^{2H}$ and $\rho = b^{2H}$. Now utilizing the lemma, we have 
\begin{align}
	\int_{ D_{2,t}^2} \frac{\mu^2 \lambda^{k'  - 1} \rho^{k' - 1}}{(\lambda \rho - \mu^2)^{(4k' + 1)/2}} \dd r \dd s \dd r' \dd s' \nonumber
	&\leq  K \int_{[0,t]^3} \frac{\mu^2 (a+b+c)^{2H(k' - 1)} b^{2H(k' - 1)}}{(b^{2H} (a^{2H}+ c^{2H}))^{(4k' + 1)/2}}  \dd a \dd b \dd c \\
	&= K \int_{[0,t]^3} \frac{\mu^2 (a+b+c)^{2H(k' - 1)}}{b^{H(2k' + 3)} (a^{2H}+ c^{2H})^{(4k' + 1)/2}}  \dd a \dd b \dd c  \nonumber \\
	&\leq  K \int_{[0,t]^3} \frac{\mu^2 (a+b+c)^{2H(k' - 1)}}{b^{H(2k' + 3)} (a+c)^{H(4k' + 1)}}  \dd a \dd b \dd c \label{keyboundevencase2}
\end{align}
where we have used $K_1(x^p + y^p) \leq (x+y)^p \leq K_2(x^p + y^p)$ in \cref{keyboundevencase2}. 
Utilizing arguments from \textbf{Case 2} of \Cref{lemma:finitenessL2odd}, we have $\mu^2 \leq K b^{4H}$. Moreover, since $H < \tH(k') < \tH(1) = 2/5 < 1/2$, this bound on $\mu^2$ is always valid. Leading on from \cref{keyboundevencase2}, we have
\begin{align*}
	 K \int_{[0,t]^3} \frac{\mu^2 (a+b+c)^{2H(k' - 1)}}{b^{H(2k' + 3)} (a+c)^{H(4k' + 1)}}  \dd a \dd b \dd c
	 &\leq K\int_{[0,t]^3} \frac{b^{4H}(a+b+c)^{2H(k' - 1)}}{b^{H(2k' + 3)} (a+c)^{H(4k + 1)}}  \dd a \dd b \dd c \\ 
	  &\leq K\int_{[0,t]^3} \frac{(a+b+c)^{2H(k' - 1)}}{b^{H(2k' - 1)} a^{\frac{H}{2}(4k' + 1)}c^{\frac{H}{2}(4k' + 1)}}  \dd a \dd b \dd c \\
	  &\leq K\int_{[0,t]^3} \frac{1}{b^{H(2k' - 1)} a^{\frac{H}{2}(4k' + 1)}c^{\frac{H}{2}(4k' + 1)}}  \dd a \dd b \dd c.
\end{align*}
The $a$ term and $c$ term exponents are obviously less than $1$ for $H <\tH(k')$. For the $b$ term exponent, we have $H(2k' -1) < \tH(k') (2k'-1) < 1$ for all $k' \geq 1$. So this integral is finite. This completes \textbf{Case 2}.
\item Finally, we look at the integration over $D^2_{3,t}$. Following the notation in \Cref{lemma:boundingregions} \cref{lemma:boundingregionsiii}, we have that $a = s - r $ and $c = s' - r'$. This gives that $\lambda = a^{2H}$ and $\rho = c^{2H}$. Now utilizing the lemma, we have 
\begin{align}
	\int_{ D_{3,t}^2} \frac{\mu^2 \lambda^{k' - 1} \rho^{k' - 1}}{(\lambda \rho - \mu^2)^{(4 k' + 1)/2}} \dd r \dd s \dd r' \dd s' \nonumber
	&\leq  K \int_{[0,t]^3} \frac{\mu^2 a^{2H(k' - 1)} c^{2H(k' - 1)}}{(a^{2H}c^{2H})^{(4 k' + 1)/2}}  \dd a \dd b \dd c \nonumber \\
	&=K \int_{[0,t]^3} \frac{\mu^2 a^{2H(k' - 1)} c^{2H(k' - 1)}}{(ac)^{H(4k' + 1)}}  \dd a \dd b \dd c \nonumber \\
	&=K \int_{[0,t]^3} \frac{\mu^2}{(ac)^{H(2k'+ 3)}}  \dd a \dd b \dd c. \label{keyboundevencase3}
\end{align}
By \citep{hu2001self}, we have
\begin{align}
	2 \mu = 2H(2H-1)ac \int_0^1 \int_0^1 (b + au + cv)^{2H-2} \dd u \dd v.\label{intrepevencase3}
\end{align}
Using Young's inequality, we have $(b + (au + cv)) \geq Kb^{\alpha} (au + cv)^{\beta} \geq Kb^{\alpha}(aucv)^{\beta/2}$, where $\alpha + \beta = 1$, and $\alpha,\beta > 0$. Notice that the exponent $2H-2$ is negative for any $H$. Thus we have 
\begin{align*}
	|\mu| &\leq K ac b^{\alpha(2H-2)} \int_0^1 \int_0^1 ((au)^{\beta/2}(cv)^{\beta/2})^{2H-2} \dd u \dd v \\
		&\leq K b^{2\alpha(H-1)} a^{\beta(H-1) + 1} c^{\beta(H-1) + 1}
\end{align*}
which yields
\begin{align*}
		\mu^2 &\leq K  b^{4\alpha(H-1)} a^{2\beta(H-1) + 2} c^{2\beta(H-1) + 2}.
\end{align*}
Leading on from \cref{keyboundevencase3}, we have
\begin{align*}
	&K \int_{[0,t]^3} \frac{\mu^2}{(ac)^{H(2k'+ 3)}}  \dd a \dd b \dd c\\
	&\leq K \int_{[0,t]^3} \frac{b^{4\alpha(H-1)} a^{2\beta(H-1) + 2} c^{2\beta(H-1) + 2}}{(ac)^{H(2k' + 3)}}  \dd a \dd b \dd c \\ 
	& =  K \int_{[0,t]^3} \frac{1}{b^{4\alpha(1-H)}(ac)^{H(2 k' + 3-2\beta)+2\beta-2}}  \dd a \dd b \dd c.
\end{align*}
Choose $\alpha = 1/4$ and $\beta = 3/4$. Then this is equal to 
\begin{align*}
	K \int_{[0,t]^3} \frac{1}{b^{(1-H)}(ac)^{H(2k' + \frac{3}{2})-\frac{1}{2}}}  \dd a \dd b \dd c.
\end{align*}
Clearly the $b$ term exponent is always less than $1$ for any $H$. Studying the $ac$ term exponent, we have 
\begin{align*}
	H\left(2 k' + \frac{3}{2}\right)-\frac{1}{2} &< \tH(k')\left (2k' + \frac{3}{2}\right )-\frac{1}{2}  \\&= \frac{4k'+ 3}{4k' + 1} - \frac{1}{2}
\end{align*}
which is less than $1$ for $k' \geq 1$. This completes \textbf{Case 3}.
\end{enumerate}
\end{proof}

\end{document}